\begin{document}
\newtheorem{lemma}{Lemma}[section]
\newtheorem{proposition}{Proposition}[section]
\newtheorem{corollary}{Corollary}[section]
\numberwithin{equation}{section}
\newtheorem{theorem}{Theorem}[section]
\theoremstyle{remark}
\newtheorem{example}{Example}[section]
\newtheorem*{ack}{Acknowledgment}
\theoremstyle{definition}
\newtheorem{definition}{Definition}[section]
\theoremstyle{remark}
\newtheorem*{notation}{Notation}
\theoremstyle{remark}
\newtheorem{remark}{Remark}[section]
\newenvironment{Abstract}
{\begin{center}\textbf{\footnotesize{Abstract}}%
\end{center} \begin{quote}\begin{footnotesize}}
{\end{footnotesize}\end{quote}\bigskip}
\newenvironment{nome}
{\begin{center}\textbf{{}}%
\end{center} \begin{quote}\end{quote}\bigskip}

\newcommand{\triple}[1]{{|\!|\!|#1|\!|\!|}}
\newcommand{\xx}{\langle x\rangle}
\newcommand{\ep}{\varepsilon}
\newcommand{\al}{\alpha}
\newcommand{\be}{\beta}
\newcommand{\de}{\partial}
\newcommand{\la}{\lambda}
\newcommand{\La}{\Lambda}
\newcommand{\ga}{\gamma}
\newcommand{\del}{\delta}
\newcommand{\Del}{\Delta}
\newcommand{\sig}{\sigma}
\newcommand{\ome}{\Omega^n}
\newcommand{\Ome}{\Omega^n}
\newcommand{\C}{{\mathbb C}}
\newcommand{\N}{{\mathbb N}}
\newcommand{\Z}{{\mathbb Z}}
\newcommand{\R}{{\mathbb R}}
\newcommand{\T}{{\mathbb T}}
\newcommand{\Rn}{{\mathbb R}^{n}}
\newcommand{\Rnu}{{\mathbb R}^{n+1}_{+}}
\newcommand{\Cn}{{\mathbb C}^{n}}
\newcommand{\spt}{\,\mathrm{supp}\,}
\newcommand{\Lin}{\mathcal{L}}
\newcommand{\SSS}{\mathcal{S}}
\newcommand{\F}{\mathcal{F}}
\newcommand{\xxi}{\langle\xi\rangle}
\newcommand{\eei}{\langle\eta\rangle}
\newcommand{\xei}{\langle\xi-\eta\rangle}
\newcommand{\yy}{\langle y\rangle}
\newcommand{\dint}{\int\!\!\int}
\newcommand{\hatp}{\widehat\psi}
\renewcommand{\Re}{\;\mathrm{Re}\;}
\renewcommand{\Im}{\;\mathrm{Im}\;}

\title[
New bounds on the high Sobolev norms of the 1d NLS solutions
]
{
New bounds on the high Sobolev norms of the 1d NLS solutions
}
\author{Diego Berti$^{1}$} 
\address{$^{1}$Dipartimento di Matematica, Universit\`a di Torino, Via Carlo Alberto 10, 10123, Torino, Italy}
\email{diego.berti@unito.it}
\author{Fabrice Planchon$^{2,3}$}
\address{$^{2}$Sorbonne Universit\'e, CNRS, IMJ-PRG F-75005 Paris, France}%\\
\address{$^{3}$Institut universitaire de France (IUF), France}
\email{fabrice.planchon@sorbonne-universite.fr}
\author{Nikolay Tzvetkov$^{3,4}$}
\address{$^{4}$
Ecole Normale Sup\'erieure de Lyon, UMPA, UMR CNRS-ENSL 5669, 46, all\'ee d'Italie, 69364-Lyon Cedex 07, France
}
\email{nikolay.tzvetkov@ens-lyon.fr}
\author{Nicola Visciglia$^{5}$}
\address{$^{5}$Dipartimento di Matematica, Universit\`a di Pisa, Largo B. Pontevorvo 5, 56124, Pisa, Italy}
\email{nicola.visciglia@unipi.it}

\thanks{The first and fourth authors are supported by the PRIN project 2020XB3EFL and GNAMPA of INDAM, the first author acknowledges financial support also by PNRR MUR - M4C2 1.5 grant ECS00000036, the second author was partially supported by ERC grant ANADEL n. 757996, the third author is partially supported by the ANR project Smooth ANR-22-CE40-0017 }

 \begin{abstract}
 We introduce modified energies that are suitable to get upper bounds on the high Sobolev norms for solutions to the $1$D periodic NLS.
  Our strategy is rather flexible and allows us to get a new and simpler proof of the bounds obtained by Bourgain in the case of the quintic nonlinearity,
  as well as its extension 
  to the case of higher order nonlinearities. Our main ingredients are a combination of integration by parts and classical dispersive estimates. 
  \end{abstract}

 \maketitle
 \par \noindent
%%%

%$k\equiv 3 \pmod 3$
\section{Introduction}
This work fits in the line of research initiated by Bourgain in \cite{B} aiming to prove polynomial upper bounds on the solutions of nonlinear dispersive PDE's on compact spatial domains. Such bounds necessarily rely on the dispersive smoothing effects satisfied by the solutions.  
Starting from the pioneering paper \cite{B} many contributions appeared in the literature about the topic, we mention in particular 
\cite{CKO}, \cite{PTV_first}, \cite{PTV2}, \cite{PTVHarm}, \cite{So1}, \cite{So2}, \cite{So3}, \cite{S}, \cite{HT}, \cite{JT}, \cite{Z} 
and the references therein.
Also the question of lower bounds on the Sobolev norms, namely the existence of unbounded orbits in Sobolev spaces, has been extensively studied in the literature, see \cite{CKSTT2}, \cite{G}, \cite{GHP}, \cite{GK}, \cite{HPTV}, \cite{HP} and the references therein.
\\

We  consider from now on the non-linear Schr\" odinger equation in the one-dimensional periodic setting
\begin{equation}
\label{e:T1-NLS}
\begin{cases}
i\partial_t u + \partial_x^2 u - \, u |u|^{2p}=0, \quad (t,x)\in \mathbb{R} \times \mathbb{T},\\ u(0,x)=u_0(x)\in H^k(\mathbb T),
\end{cases}
\end{equation}
where $p,k\in \mathbb N$ and $p\geq 2$, $k\geq 1$. It is well--known that the Cauchy problem \eqref{e:T1-NLS}
admits one unique global solution in the space ${\mathcal C}(\R; H^k(\T))$ (see Section \ref{prel} for details about the Cauchy theory).
Hence the rest of the paper is devoted to the analysis of the behavior of such solutions for large times, in particular we consider the question
of a--priori bounds of the possible growth of Sobolev norms.\\

It is worth mentioning that if we consider the same Cauchy problem
posed on the line $\R$ then no growth phenomenon of Sobolev norms is possible, indeed all the Sobolev norms are uniformly bounded as a consequence of the  scattering results proved in \cite{N} for $p>2$ and \cite{D} for $p=2$.
We also recall that growth of Sobolev norms is not allowed for solutions to \eqref{e:T1-NLS} for $p=1$. Indeed cubic NLS  (in both cases $\T$ and $\R$) is completely integrable and in particular the corresponding solutions satisfy infinitely many conservation laws that allow to control uniformly in time
the Sobolev norms $H^k(\T)$ for every $k\in \N$.
\\

It is quite elementary to show that the Cauchy problem \eqref{e:T1-NLS} is globally well-posed for $k\geq 1$ (see Section \ref{prel} below for a precise statement on well-posedness).
In the sequel we will set  the initial condition at an arbitrary  time $t_0\in \mathbb R$, namely
we shall consider the Cauchy problem 
\begin{equation}\label{NLSt0}
\begin{cases}
i\partial_t u + \partial_x^2 u - \, u |u|^{2p}=0, \quad (t,x)\in \mathbb{R} \times \mathbb{T},\\
u(t_0,x)=u_0(x)\in H^k(\mathbb T).
\end{cases}
\end{equation}
Along the rest of the paper we use the following norm associated with $H^k(\mathbb T)$
$$
\|u\|^2_{H^k(\mathbb T)}=\|u\|_{L^2(\mathbb T)}^2+ \|\partial_x^k u\|_{L^2(\mathbb T)}^2.
$$
Our main result is the following statement.
\begin{theorem}\label{main}
Let $t_0\in\R$, $k,p\in \mathbb N$ with $k\geq 1$ and $p\geq 2$, $R>0$ and $\varepsilon>0$ be fixed. Then there exist $C,T>0$ and an energy
$${\mathcal E}_k: H^k(\mathbb{T})\longrightarrow \mathbb R,$$
with the structure
\begin{equation}\label{1}
{\mathcal E}_k(u)=\|u\|_{H^k(\mathbb T)}^2 + {\mathcal F}_k(u)
\end{equation}
such that the following property holds:
if  $u(t,x)\in {\mathcal C}(\R; H^k(\T))$  is the unique solution to \eqref{NLSt0} with 
$$
\|u(t_0,\cdot)\|_{H^1(\T)}\leq R
$$  
%for any $t_0 \in \mathbb R$, 
then we have:
\begin{equation}\label{2}\sup_{t\in (t_0-T, t_0+T)}|{\mathcal F}_k(u(t))|\leq C \|u(t_0)\|_{H^k(\mathbb T)}^{\frac{2k-4}{k-1}}\,,
\end{equation}
\begin{equation}\label{3}{\mathcal E}_k(u(t))\leq {\mathcal E}_k(u(t_0)) + C \|u(t_0)\|^{\frac{2k-4}{k-1}+\varepsilon}_{H^k(\mathbb T)}, \quad \forall t\in (t_0-T, t_0+T).\end{equation}
\end{theorem}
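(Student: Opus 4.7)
The plan is to build $\mathcal{E}_k=\|u\|_{H^k}^2+\mathcal{F}_k$ by adding a single quadratic-in-top-derivative correction that cancels the unique obstructive term in $\tfrac{d}{dt}\|u\|_{H^k}^2$, and then to handle what remains by dispersive estimates on a short interval $(t_0-T,t_0+T)$ supplied by the $H^1$ Cauchy theory (on which $\|u\|_{H^1}\le 2R$).

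\textbf{Energy identity and resonant obstruction.} From $i\partial_t u=-\partial_x^2 u+u|u|^{2p}$ and integration by parts in $x$,
\begin{equation*}
\tfrac{d}{dt}\|u\|_{H^k}^2=2\,\Im\!\int_{\T}(\partial_x^k\bar u)\,\partial_x^k(u|u|^{2p})\,dx.
\end{equation*}
Expanding $\partial_x^k(u^{p+1}\bar u^p)$ by Leibniz, the terms in which all $k$ derivatives fall on a single factor are of two types: the one landing on a $u$-factor gives $(p{+}1)|u|^{2p}\partial_x^k u$, whose pairing with $\partial_x^k\bar u$ is real and vanishes under $\Im$; the one landing on a $\bar u$-factor gives $p\,u^{p+1}\bar u^{p-1}\partial_x^k\bar u$ and thus the only obstruction
\begin{equation*}
R_k(u):=2p\,\Im\!\int_{\T}u^{p+1}\bar u^{p-1}(\partial_x^k\bar u)^2\,dx.
\end{equation*}
All remaining Leibniz contributions carry at most $k{-}1$ derivatives on any single factor and will be placed in the error.

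\textbf{The correction and bound \eqref{2}.} I set
\begin{equation*}
\mathcal{F}_k(u):=p\,\Re\!\int_{\T}(\partial_x^{k-1}\bar u)^2\,u^{p+1}\bar u^{p-1}\,dx.
\end{equation*}
Differentiating in $t$ and substituting $\partial_t\bar u=-i\partial_x^2\bar u+i\bar u|u|^{2p}$ in the $\partial_x^{k-1}\partial_t\bar u$ factor, the dominant piece of $\tfrac{d}{dt}\mathcal{F}_k$ equals $2p\,\Im\!\int(\partial_x^{k-1}\bar u)(\partial_x^{k+1}\bar u)V\,dx$ with $V=u^{p+1}\bar u^{p-1}$; one integration by parts in $x$ turns $(\partial_x^{k-1})(\partial_x^{k+1})$ into $-(\partial_x^k)^2$ plus a cross-term in which one derivative has been transferred to the smooth coefficient $V$, giving $\tfrac{d}{dt}\mathcal{F}_k(u)=-R_k(u)+(\text{error})$. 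Bound \eqref{2} follows immediately from $|\mathcal{F}_k(u)|\le p\,\|\partial_x^{k-1}u\|_{L^2}^2\,\|u\|_{L^\infty}^{2p}$, the Sobolev embedding $H^1(\T)\hookrightarrow L^\infty(\T)$, and the Gagliardo--Nirenberg inequality $\|\partial_x^{k-1}u\|_{L^2}\le C\|u\|_{H^k}^{(k-2)/(k-1)}\|u\|_{H^1}^{1/(k-1)}$, using $\|u\|_{H^1}\le 2R$.

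\textbf{Bound \eqref{3} and the main obstacle.} The error in $\tfrac{d}{dt}\mathcal{E}_k$ is a sum of multilinear integrals pairing $\partial_x^k\bar u$ with products of factors of derivative order $\le k{-}1$. Putting one such factor in $L^2$ and the remaining ones in $L^\infty$ via $H^{1/2+\varepsilon}\hookrightarrow L^\infty$, then interpolating between $H^1$ and $H^k$, yields only a pointwise-in-time bound $C_R\|u\|_{H^k}^{(2k-4)/(k-1)+\delta}$ with an unavoidable loss $\delta=O(1/(k-1))>0$. The main obstacle is to beat this loss by a short-time average: I integrate in $t\in(t_0-T,t_0+T)$ and absorb $\delta$ via the classical periodic Strichartz/dispersive estimates for $e^{it\partial_x^2}$ (with their $\varepsilon$-loss, Bourgain's $L^6_{t,x}$ bound being the prototype), trading $L^\infty_t$ for a suitable $L^q_t$ on the intermediate-regularity factors. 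This produces the time-integrated bound $C\|u(t_0)\|_{H^k}^{(2k-4)/(k-1)+\varepsilon}$, hence \eqref{3}. The algebraic step --- the choice of $\mathcal{F}_k$ --- is forced by the form of $R_k$; the delicate work is the Strichartz bookkeeping that controls the error uniformly in the nonlinearity exponent $p$.
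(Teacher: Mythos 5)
Your identification of the leading obstruction (the term where all $k$ derivatives fall on a single conjugate factor) and your first correction $\mathcal F_k=p\Re\int(\partial_x^{k-1}\bar u)^2u^{p+1}\bar u^{p-1}$ coincide, up to conjugation and normalization, with the first step of the paper's construction (the density $\tilde{\mathcal I}_{k,1}$), and your bound \eqref{2} for this single density is correct. The gap is in the claim that everything else can be ``placed in the error'' and then repaired by Strichartz estimates on a short time interval. The error you are left with contains terms of the form $\Im\int \partial_x^{k}u\,\partial_x^{\beta}v_1\,\partial_x^{\gamma}v_2\cdot(\text{undifferentiated factors})$ with $\beta+\gamma=k$, $\beta,\gamma\ge1$, i.e.\ $2k$ derivatives spread over only \emph{three} factors; one such term is produced already by the cross-term of your own integration by parts, $\Im\int(\partial_x^{k}\bar u)(\partial_x^{k-1}\bar u)\,\partial_xV$. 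For such a term the best available estimate, even after time integration and with the periodic $L^4$ or $L^6$ Strichartz bounds, is $\|u(t_0)\|_{H^k}\cdot\|u(t_0)\|_{H^{k-1}}\cdot\|u(t_0)\|_{H^{1+\gamma}}\cdots\sim\|u(t_0)\|_{H^k}^{(2k-3)/(k-1)}$. The excess $\delta=1/(k-1)$ is a loss of a full derivative distributed over the factors, not a loss of integrability: Strichartz estimates gain integrability in $(t,x)$ but no derivatives, so no ``short-time averaging'' can convert $(2k-3)/(k-1)$ into $(2k-4)/(k-1)+\varepsilon$ with $\varepsilon$ arbitrarily small. Accepting that loss is precisely what yields the weaker growth rate $t^{k-1+\varepsilon}$ of Colliander--Kwon--Oh rather than the rate $t^{(k-1)/2+\varepsilon}$ claimed here.

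This is why the paper does not stop after one correction: the exponent $(2k-4)/(k-1)$ is attainable only for densities with at least \emph{four} nontrivially differentiated factors (the class $\Omega_k$), since then four factors go into $L^4_{t,x}$ via Zygmund's estimate and interpolation gives exactly $\sum_{i=1}^4(\alpha_i-1)/(k-1)=(2k-4)/(k-1)$. Eliminating all two- and three-factor densities forces a cascade of corrections $\tilde{\mathcal I}_{k,h},\tilde{\mathcal K}_{k,h},\tilde{\mathcal V}_{k,h},\tilde{\mathcal W}_{k,h}$, $h=1,\dots,m+1$ with $k=3m+r$: each correction cancels one obstructive density but generates new three-factor densities with the derivatives redistributed, and the resulting linear systems (Lemmas \ref{lem:1}--\ref{lem:3} and the matrix-invertibility arguments in the proof of Proposition \ref{prop:structure}) must be solved to close the scheme. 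Your proposal also has no mechanism for the case $k=3m$, where the cascade terminates on a genuinely uncancellable resonant density $\Im\int(\partial_x^{2m}u)^3u^{p-2}\bar u^{p+1}$ (three factors each carrying $2m=2k/3$ derivatives); this term is not removable by any quadratic-type correction and is the sole reason the paper invokes $X^{s,b}$ spaces (Proposition \ref{bourgainspaces}) and the sole source of the $\varepsilon$-loss in \eqref{3}. As written, your argument would need the full iterative construction and the Bourgain-space estimate to be completed.
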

The proof of Theorem \ref{main} is an elaboration on the techniques introduced in \cite{PTV,PTV2}.
In the case $k\neq 3m$ ($m\in \Z$), we use the rather classical Zygmund $L^4$ space-time bound for linear solutions to the Schr\"odinger equation combined with the analysis of the invertibility of a matrix with dimension growing along with $k$. 
Compared with \cite{PTV2}, we have to deal with a complex valued $u$: for such $u$ the construction of the modified energies is significantly more involved. In the case 
$k=3m$ 
we have to use slightly more delicate dispersive tools involving Bourgain's spaces, to deal with unfavorable terms that cannot be eliminated by integrations by parts. Along the proof we shall also see that, except for the case $k= 3m$, we can set $\varepsilon=0$ in Theorem \ref{main}.
\\

As a byproduct of Theorem \ref{main} we get the following consequence about the polynomial upper bound on the growth of Sobolev norms of the solutions of \eqref{e:T1-NLS}.
\begin{theorem}\label{corcor}
Let $k,p\in \mathbb N$ with $k\geq 1$ and $p\geq 2$, $R>0$ and $\varepsilon>0$ be fixed.
%Let $k\in \mathbb N$ and $R>0$, $\varepsilon>0$ be fixed, 
Then there exists $C>0$ 
such that  for every $u_0\in H^k(\mathbb{T})$ such that
$\|u_0\|_{H^1(\mathbb{T})}\leq R$ we have the bound
\begin{equation}\label{e:sharp-polyn-growth}
\|u(t)\|_{H^k(\mathbb T)}\leq C \|u_0\|_{H^k(\mathbb T)} + Ct^{\frac{k-1}2+\varepsilon}
\end{equation}
where $u(t,x)$ is the unique global solution to \eqref{e:T1-NLS}.
Moreover, in the case $k\neq 3m$ ($m\in \Z$), we have \eqref{e:sharp-polyn-growth}
 with $\varepsilon=0$.
\end{theorem}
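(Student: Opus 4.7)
The plan is to iterate Theorem \ref{main} over successive intervals of length $T$, exploiting the conservation laws of the defocusing NLS \eqref{e:T1-NLS} to propagate the $H^1$ a priori bound uniformly in time. The main obstacle will be the (elementary but careful) bookkeeping in the resulting discrete Gronwall step: the gain per step scales like a sublinear power of $\|u\|_{H^k(\T)}^2$, so the recursion compares with an ODE $y'=Cy^{\beta}$, $\beta<1$, producing polynomial growth with the advertised exponent $(k-1)/2$.

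First I would observe that since the nonlinearity in \eqref{e:T1-NLS} is defocusing, the Hamiltonian
$$E(u) \;=\; \|\partial_x u\|_{L^2(\T)}^{2} + \frac{1}{p+1}\|u\|_{L^{2p+2}(\T)}^{2p+2}$$
is conserved and positive definite; combined with conservation of the $L^2$ mass, the hypothesis $\|u_0\|_{H^1(\T)}\leq R$ upgrades to a uniform bound $\|u(t)\|_{H^1(\T)}\leq R'$ for every $t\in\R$, with $R'$ depending only on $R$. Therefore Theorem \ref{main} is applicable at every base time $t_0\in\R$ with constants $C,T$ depending only on $R',k,p$ and on an auxiliary parameter $\varepsilon_1>0$, to be chosen as a small function of the target $\varepsilon$ in Theorem \ref{corcor}.

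Next, set $t_n:=nT$ (taking without loss of generality the initial time to be $0$), $M_n:=\|u(t_n)\|_{H^k(\T)}^{2}$ and $\mathcal{N}_n:=\mathcal{E}_k(u(t_n))$. The structure \eqref{1} combined with \eqref{2} gives
$$M_n - CM_n^{(k-2)/(k-1)} \;\leq\; \mathcal{N}_n \;\leq\; M_n + CM_n^{(k-2)/(k-1)},$$
so $\mathcal{N}_n$ and $M_n$ are comparable for $M_n$ large, and since $\tfrac{k-2}{k-1}<1$ any polynomial control on $\mathcal{N}_n$ transfers to $M_n$. Applying \eqref{3} on $[t_n,t_{n+1}]$ yields the recursion
$$\mathcal{N}_{n+1} \;\leq\; \mathcal{N}_n + C M_n^{(k-2)/(k-1)+\varepsilon_1/2},$$
valid for every $n\in\N$, with the choice $\varepsilon_1=0$ permitted when $k\neq 3m$.

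The main step is then to solve this discrete recursion. Setting $\beta:=\tfrac{k-2}{k-1}+\tfrac{\varepsilon_1}{2}<1$ and comparing with the ODE $y'=Cy^{\beta}$ (whose solutions grow like $t^{1/(1-\beta)}$), a routine induction produces
$$\mathcal{N}_n \;\leq\; C\mathcal{N}_0 + C(n+1)^{1/(1-\beta)}.$$
Passing back to $M_n$, taking square roots, and recovering an arbitrary $t$ by one additional application of \eqref{3} on the remainder interval $[nT,t]$ gives
$$\|u(t)\|_{H^k(\T)} \;\leq\; C\|u_0\|_{H^k(\T)} + C t^{1/(2(1-\beta))}.$$
Since $\tfrac{1}{2(1-\beta)}=\tfrac{k-1}{2-(k-1)\varepsilon_1}$, choosing $\varepsilon_1$ small enough in terms of $\varepsilon$ makes the exponent at most $\tfrac{k-1}{2}+\varepsilon$, which is \eqref{e:sharp-polyn-growth}; when $k\neq 3m$, the admissibility of $\varepsilon_1=0$ yields the exponent exactly $\tfrac{k-1}{2}$. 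All the substantial PDE input has been packaged into Theorem \ref{main} and the conservation laws, so the only remaining work is the careful Gronwall bookkeeping sketched above.
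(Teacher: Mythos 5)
Your proposal is correct and is precisely the classical iteration argument the paper has in mind: the authors do not write out the proof of Theorem \ref{corcor}, stating only that it is standard once Theorem \ref{main} is available (citing \cite{PTV_first}), and your combination of the uniform $H^1$ bound from energy/mass conservation, the comparability of $\mathcal E_k$ and $\|\cdot\|_{H^k}^2$ via \eqref{2}, the step-by-step application of \eqref{3}, and the discrete Gronwall comparison with $y'=Cy^{\beta}$, $\beta=\frac{k-2}{k-1}+\frac{\varepsilon_1}{2}<1$, is exactly that argument with the correct exponent bookkeeping. No gaps.
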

The proof of the Theorem \ref{corcor} is classical once Theorem \ref{main} is established (see e.g. \cite{PTV_first}). 
In the case $p=2$,  Corollary \ref{corcor} recovers the result of \cite{B2} with a different proof. In the case $p\geq 3$ the result of 
 Corollary \ref{corcor} is new and improves on \cite{CKO} where the following weaker bound is obtained:
\begin{equation}\label{ANPDE}
\|u(t)\|_{H^k(\mathbb T)}\leq C \|u_0\|_{H^k(\mathbb T)} + Ct^{k-1+\varepsilon}\, .
\end{equation}
Observe that the bound \eqref{ANPDE} is a consequence of \cite{PTV_first} where the same bound is established for solutions of NLS on $\T^2$ by simply considering a data independent of one of the variables.
%%%
\\
%\noindent 

{\bf Notations.} Along the paper we shall denote by $H^k$ the Sobolev space $H^k(\mathbb T)$ and more generally by $W^{k,p}$  the Sobolev space $W^{k,p}(\mathbb T)$. We denote by $e^{it\partial_x^2}$ the group associated with the linear Schr\"odinger equation with periodic boundary condition.
For any $p\in [1, \infty]$, $(X,\|\cdot\|)$ Banach space and  $I\subset \R$ interval, we denote by $\|u\|_{L^p(I; X)}$, where $u$ is any space-time function, the norm defined by $(\int_I \|u(t)\|_{X}^p dt)^\frac 1p$ for $p\neq \infty$, with the usual modification for $p=\infty$. For shortness we shall write $\int f=\int_\T f(x) dx$ for any time--independent periodic function and for time-dependent functions $\int_I\int f =\int_I \int_\T f dtdx$ for any time interval $I\subset \R$.
%%%%%%%%%%%%
\section{Preliminary facts on the Cauchy theory}\label{prel}
In this section we gather together some well-known facts about solutions to \eqref{NLSt0}
that will be useful in the sequel.
We consider two different functional settings, the first one is the usual Sobolev spaces, the second one is the more sophisticated Bourgain's $X^{s,b}$ spaces framework. This last framework is needed to establish Theorem \ref{main} and \ref{corcor}
in the case $k=3m$, all the other cases do not require the use of the $X^{s,b}$ spaces.
\subsection{The classical Sobolev setting}
%%%%
\begin{proposition}\label{cauchyprop}
For every $k\geq 1$, $R>0$ there exist $C,T>0$ such that for every $t_0\in \mathbb R$
and for every $u_0\in H^k$ with $\|u_0\|_{H^1}\leq R$ 
the Cauchy problem \eqref{NLSt0} admits one unique local solution
\begin{equation}\label{sobol}
u(t,x)\in {\mathcal C}((t_0-T, t_0+T); H^k) \hbox{ s.t. } \sup_{t\in (t_0-T, t_0+T)} 
\|u(t)\|_{H^k}\leq C \|u(t_0)\|_{H^k}.\end{equation}
Moreover the unique solution $u(t,x)$ satisfies the bound
\begin{equation}\label{zygmund}
\|u(t,x)\|_{L^4((t_0-T, t_0+T); W^{k,4})} \leq C \|u(t_0)\|_{H^k}.
\end{equation} 
Finally the local solution $u(t,x)$ can be extended globally in time, in particular
$u(t,x)\in {\mathcal C}(\R; H^k)$.
\end{proposition}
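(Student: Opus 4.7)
The plan is to combine the classical contraction principle at the $H^1$ level with persistence of regularity for higher derivatives, and to derive the space-time bound \eqref{zygmund} from Bourgain's $L^4$ Strichartz estimate on $\T$. The two conservation laws of \eqref{NLSt0}, namely the mass $\|u\|_{L^2}^2$ and the defocusing energy $\tfrac12\|\partial_x u\|_{L^2}^2+\tfrac{1}{2p+2}\int|u|^{2p+2}$, combined with the one-dimensional Sobolev embedding $H^1(\T)\hookrightarrow L^\infty(\T)$, provide a uniform in time a priori bound $\|u(t)\|_{L^\infty}\leq C(R)$ that will be used freely throughout.

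First, I would establish local well-posedness in $H^1$ by a Picard iteration on the Duhamel formula
$$u(t)=e^{i(t-t_0)\partial_x^2}u_0-i\int_{t_0}^t e^{i(t-s)\partial_x^2}(u|u|^{2p})(s)\,ds$$
on a time interval of length $T=T(R)$. Since $H^1(\T)$ is a Banach algebra and $u\mapsto u|u|^{2p}$ is therefore Lipschitz on $H^1$-balls, a standard contraction produces the unique local solution, which the conservation laws extend globally. Passing from $H^1$ to $H^k$ for $k\geq 2$ is done by persistence of regularity: differentiating the equation $k$ times, pairing with $\overline{\partial_x^k u}$ and integrating by parts yields
$$\frac{d}{dt}\|\partial_x^k u\|_{L^2}^2=-2\,\mathrm{Im}\int\overline{\partial_x^k u}\,\partial_x^k\bigl(u|u|^{2p}\bigr)\,dx.$$
A Moser-type estimate $\|\partial_x^k(u|u|^{2p})\|_{L^2}\leq C(\|u\|_{L^\infty})\|\partial_x^k u\|_{L^2}$, valid by the Leibniz rule and the algebra property of $H^k$ together with $H^1\hookrightarrow L^\infty$ on every low-order factor, turns this into a linear differential inequality whose coefficient depends only on $R$, and Gronwall on $(t_0-T,t_0+T)$ delivers \eqref{sobol}.

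For \eqref{zygmund}, the key dispersive input is Bourgain's $L^4$ Strichartz estimate on $\T$,
$$\|e^{it\partial_x^2}\phi\|_{L^4((-1,1);L^4(\T))}\leq C\|\phi\|_{L^2(\T)}.$$
Applying $\partial_x^k$ to the Duhamel formula and combining this estimate with Minkowski's inequality on the inhomogeneous term gives, with $I=(t_0-T,t_0+T)$,
$$\|\partial_x^k u\|_{L^4(I;L^4)}\lesssim \|u(t_0)\|_{H^k}+T\,\|\partial_x^k(u|u|^{2p})\|_{L^\infty(I;L^2)},$$
and the same Moser estimate combined with \eqref{sobol} bounds the last term by $C(R)\|u(t_0)\|_{H^k}$.

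The main obstacle is to keep every constant depending only on $R=\|u(t_0)\|_{H^1}$ and not on the possibly very large $\|u(t_0)\|_{H^k}$; this is precisely why I treat $H^k$ by persistence of regularity on top of the $H^1$ theory rather than by a direct $H^k$ contraction, which would only give a local time scaling like $T=T(\|u_0\|_{H^k})$. A secondary technical point is the Leibniz expansion of $\partial_x^k(u^{p+1}\bar u^p)$: it produces finitely many (in $k$ and $p$) products of derivatives $\prod_j \partial_x^{\al_j}v_j$ with $v_j\in\{u,\bar u\}$ and $\sum_j\al_j=k$, and every factor except the one carrying the highest derivative is controlled in $L^\infty$ by a Gagliardo-Nirenberg interpolation between $\|v\|_{L^\infty}\lesssim R$ and $\|\partial_x^k v\|_{L^2}$, yielding the required linear-in-$\|\partial_x^k u\|_{L^2}$ bound.
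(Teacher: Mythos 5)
Your proposal is correct and follows essentially the same route as the paper's sketch: an $H^1$ contraction via the Duhamel formula and the embedding $H^1(\T)\hookrightarrow L^\infty(\T)$, globalization by mass and (defocusing) energy conservation, persistence of $H^k$ regularity through tame/Moser estimates and Gronwall, and the space-time bound \eqref{zygmund} from the integral formulation together with the $L^4$ estimate for the periodic Schr\"odinger group (which the paper attributes to Zygmund). No substantive difference in strategy.
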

\begin{proof}[Sketch of Proof]
Recall that the existence of one unique solution local in time for \eqref{NLSt0}, for initial datum in $H^1$, follows by combining
the Sobolev embedding $H^1\subset L^\infty$ along with the fact that $e^{it\partial_x^2}$ is an isometry in $H^1$. As a consequence
one can show that the integral nonlinear operator
$$u\longmapsto e^{it\partial_x^2} u(t_0)- i \int_{t_0}^{t_0+t} e^{i(t-s+t_0)\partial_x^2} (u(s)|u(s)|^{2p}) ds$$
is a contraction in a suitable ball of the space $\mathcal C((t_0-T, t_0+T); H^1)$, provided $T=T(R)$ is sufficiently small. 
In particular the Duhamel operator associated with \eqref{NLSt0} admits a unique fixed point in this ball.
%on the interval $(t_0-T, t_0+T)$
%for some $T=T(\|u(t_0)\|_{H^1})>0$. 
This concludes the proof of the existence 
%and uniqueness 
of a local solution in $H^1$.
The uniqueness in  $\mathcal C((t_0-T, t_0+T); H^1)$ follows by similar arguments. 
Based on the conservation of energy (which is non-negative since we are working with the defocusing NLS)  we also have  
\begin{equation}\label{energyest}\sup_t \|u(t)\|_{H^1}<\infty\end{equation}
hence by an iteration of the local existence result stated above the solution $u$ is global in time. 
By using tame estimates one can show that if the initial datum belongs to $H^k$ with $k>1$ then the regularity is propagated
 and in particular \eqref{sobol} holds.
%we easily get an exponential upper bound for Sobolev norm $H^k$ with $k>1$.
%The proof of the existence of solution with the properties \eqref{sobol} follows by 
%a combination of Sobolev embedding $H^1\subset L^\infty$, energy estimates for the linear propagator and nonlinear tame estimates, while the 
The proof of the  bound \eqref{zygmund} follows by combining \eqref{sobol} with 
the integral formulation associated with \eqref{NLSt0} along with the classical  linear
bound by Zygmund (see \cite{Zy}):
$$
\|e^{it \partial_{x}^2} \varphi\|_{L^4((0,1); L^4)}\leq C \|\varphi\|_{L^2}.
$$
\end{proof}
\subsection{The $X^{s,b}$ setting}
We first recall the definition of Bourgain's spaces $X^{s,b}_T$ introduced in \cite{Bo_gafa}. 
%Next we mention basic facts about those spaces and its application within the context of the Cauchy theory
%associated with \eqref{NLSt0}.
First we introduce the (global in time) $X^{s,b}$ space equipped with the norm
$$
\| u\|_{X^{s,b}}^2=\sum_{n\in\Z}\int_{\R}\langle \tau+n^2\rangle^{2b}\langle n\rangle^{2s}|\widehat{u}(\tau,n)|^2\, d\tau\,.
$$
For every finite interval $I\subset \R$ we introduce the time localized version $X^{s,b}_I$  of the spaces $X^{s,b}$, defined as the space of distributions on $I\times \T$  that can be extended 
to a global function belonging to $X^{s,b}$. We equip this space with the norm
$$
\| u\|_{X^{s,b}_I}=\inf \| \tilde{u}\|_{X^{s,b}},
$$
where the $\inf$ is taken over all global extensions $\tilde{u}\in X^{s,b}$ of $u$. 
For $b>1/2$ the space  $X^{s,b}_I$ is embedded in ${\mathcal C}(I;H^s)$. 
The Cauchy theory for \eqref{NLSt0} can be strengthened within the $X^{s,b}$ spaces as follows.
\begin{proposition}\label{cauchypropXsb}
For every $s\geq 1$, $R>0$  and $b>\frac 12$  there exist $C,T>0$ such that for every $t_0\in \mathbb R$
and for every $u_0\in H^s$ with $\|u_0\|_{H^1}\leq R$ 
the Cauchy problem \eqref{NLSt0} admits one unique local solution
\begin{equation}\label{sobolXsb}
u(t,x)\in X^{s,b}_{(t_0-T, t_0+T)} \hbox{ s.t. } \|u\|_{X^{s,b}_{(t_0-T, t_0+T)}} \leq C \|u(t_0)\|_{H^s}\,.
\end{equation}
Moreover the solution $u(t,x)$ can be extended globally in time and belongs to the space
$X^{k,b}_{I}$ for every bounded time interval $I\subset \R$.
\end{proposition}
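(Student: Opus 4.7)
The plan is to obtain Proposition \ref{cauchypropXsb} by a standard Bourgain-style contraction mapping in the Duhamel formulation, inside a ball of $X^{s,b}_{(t_0-T,t_0+T)}$ of radius proportional to $\|u_0\|_{H^s}$, with lifespan $T$ depending only on $R$.

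First I would recall the standard Bourgain linear estimates: for any $b>1/2$ one can fix $b'<1/2$ with $b+b'<1$ so that, with a smooth cutoff $\eta$ equal to $1$ on $[-1,1]$,
$$
\|\eta(t/T)\,e^{it\partial_x^2}\varphi\|_{X^{s,b}}\lesssim \|\varphi\|_{H^s}, \qquad \Bigl\|\eta(t/T)\int_{t_0}^t e^{i(t-\tau)\partial_x^2}F(\tau)\,d\tau\Bigr\|_{X^{s,b}}\lesssim T^{1-b-b'}\|F\|_{X^{s,-b'}}.
$$
I would also use the embeddings $X^{s,b}_I\hookrightarrow {\mathcal C}(I;H^s)$ for $b>1/2$ and $L^2(I;H^s)\hookrightarrow X^{s,-b'}_I$ for $b'\geq 0$.

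Next, the key multilinear estimate is obtained by Moser's inequality together with $H^1(\T)\hookrightarrow L^\infty(\T)$:
$$
\|u|u|^{2p}\|_{X^{s,-b'}_{(t_0-T,t_0+T)}}\lesssim T^{1/2}\|u\|_{L^\infty_t H^1_x}^{2p}\|u\|_{L^\infty_t H^s_x}\lesssim T^{1/2}\|u\|_{X^{1,b}_{(t_0-T,t_0+T)}}^{2p}\|u\|_{X^{s,b}_{(t_0-T,t_0+T)}}.
$$
The crucial feature is that $2p$ factors are controlled by the $H^1$ norm and only one by the full $H^s$ norm, so that once the $X^{1,b}$ factor is known to be $\lesssim R$ the contraction can be closed in a ball of radius $\sim\|u_0\|_{H^s}$.

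To obtain this a priori $X^{1,b}$ bound I would run the fixed point first at level $s=1$: combining the Duhamel and multilinear estimates gives $\|u\|_{X^{1,b}_T}\lesssim R+T^{3/2-b-b'}\|u\|_{X^{1,b}_T}^{2p+1}$, which by a standard bootstrap yields $\|u\|_{X^{1,b}_{(t_0-T,t_0+T)}}\lesssim R$ on some interval $T=T(R)>0$. Substituting this into the $X^{s,b}$ estimate one obtains $\|u\|_{X^{s,b}_T}\lesssim \|u_0\|_{H^s}+CT^{3/2-b-b'}R^{2p}\|u\|_{X^{s,b}_T}$, and closing on a possibly smaller $T=T(R)$ produces the bound \eqref{sobolXsb}. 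The contraction estimate for the difference $u|u|^{2p}-v|v|^{2p}$ has the same multilinear structure and is treated identically. Uniqueness in $X^{s,b}_{(t_0-T,t_0+T)}$ is inherited from the uniqueness in ${\mathcal C}((t_0-T,t_0+T);H^1)$ established in Proposition \ref{cauchyprop}. For the global extension, the conservation of energy gives a uniform-in-time bound on $\|u(t)\|_{H^1}$; since the local lifespan depends only on this quantity, any bounded interval $I\subset\R$ can be covered by finitely many subintervals on each of which the local $X^{s,b}$ bound applies, and the pieces patch together to yield $u\in X^{s,b}_I$.

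The main obstacle I expect is the careful multilinear bookkeeping in the nonlinear estimate: one must isolate the $H^1$-controlled factors from the single $H^s$-controlled factor so that the coefficient in front of the linear $\|u\|_{X^{s,b}}$ term takes the form $CT^{3/2-b-b'}R^{2p}$, which can be made $<1/2$ by a choice of $T$ depending only on $R$. This is precisely what allows the radius of the fixed-point ball to be proportional to $\|u_0\|_{H^s}$ as required in \eqref{sobolXsb}, rather than to a higher power of $\|u_0\|_{H^s}$.
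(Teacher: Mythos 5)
The paper offers no proof of Proposition \ref{cauchypropXsb}: it is invoked as a standard consequence of Bourgain's $X^{s,b}$ framework from \cite{Bo_gafa}, so there is no argument of the authors to compare against. Your contraction scheme is the correct standard one and fills the omission: the tame estimate $\|u|u|^{2p}\|_{H^s}\lesssim\|u\|_{H^1}^{2p}\|u\|_{H^s}$ combined with $L^2_tH^s\hookrightarrow X^{s,-b'}$ and the $T^{1-b-b'}$ gain in the Duhamel term correctly isolates the $H^1$-controlled factors, so the lifespan depends only on $R$ and the ball radius scales linearly in $\|u_0\|_{H^s}$, exactly as \eqref{sobolXsb} requires. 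The only points worth tightening are routine: the absorption step $\|u\|_{X^{s,b}_T}\leq C\|u_0\|_{H^s}+\tfrac12\|u\|_{X^{s,b}_T}$ needs the a priori finiteness of the left-hand side (run the contraction in the intersection of the $X^{1,b}$- and $X^{s,b}$-balls, closed under the weaker metric), and the gluing of local pieces into $X^{s,b}_I$ is the standard cutoff argument, cleanest for $b\in(\tfrac12,1]$, which is the only range the paper actually uses.
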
 

\section{Construction of the energies ${\mathcal E}_k$}

We start this section by introducing suitable densities that will be useful either to define the energy $\mathcal E_k(u(t,\cdot))$
or to compute $\frac d{dt} \mathcal E_k(u(t,\cdot))$ along the flow of \eqref{NLSt0}.\\
\\
%However, in order to obtain the (sharp) growth of Sobolev norms of the solution $u$ of \eqref{e:T1-NLS}, the contributions of terms as in \eqref{e:basic-densities} need to be somehow corrected with {\em good} terms. These latter are composed essentially by densities involving the product of (at least) four derivatives or having derivatives of order (at most) $k-1$. To make this formal, we provide the following definitions. 
%\smallskip
\begin{definition}\label{defgammak} For a fixed $(i,j)=(i_1, \ldots, i_n;j_1, \ldots, j_n)\in \mathbb N^n \times \mathbb N^n$, let $\mathcal J_{(i,j)}(u)$ be the functional
\[
\mathcal J_{(i,j)}(u)=\int  \partial_x^{i_1} u\, \cdots \partial_x^{i_n}u \, \partial_x^{j_1}\bar u\cdots \partial_x^{j_n}\bar u.
\]
Then we set 
\begin{multline*}
\Theta_k:=\left\{\mbox{ linear combinations of } \, \mathcal J_{(i,j)}(u):\, (i,j)\in \mathcal{C}^k\right\},
\\
\Omega_k:=\left\{\mbox{ linear combinations of } \, \mathcal J_{(i,j)}(u):\, (i,j)\in \mathcal{D}^k\right\},
\\
\Gamma_k:=\left\{\mbox{ linear combinations of } \, \mathcal J_{(i,j)}(u):\, (i,j)\in \mathcal{G}^k\right\},
\end{multline*}
where $\mathcal C^k$, $\mathcal D^k$, $\mathcal G^k$ are the next sets of multi-indexes:
\begin{multline*}
\mathcal C^k := \Big\{(i,j) \in \mathbb N^{2p+1} \times \mathbb N^{2p+1}:\,\, i_1 \ge i_2 \ge \ldots \ge i_{2p+1},\\ j_1 \ge j_2 \ge \ldots \ge j_{2p+1},
\sum_{\ell =1}^{2p+1} i_\ell+ j_\ell =2k-2,\, i_1, \, j_1\le k-1\Big\},
\\
\mathcal{D}^k:=\Big\{(i,j)\in \mathbb{N}^{p+1}\times \mathbb{N}^{p+1}:\, i_1 \ge i_2 \ge \ldots \ge i_{p+1}, \\ j_1 \ge j_2 \ge \ldots \ge j_{p+1},
\sum_{\ell=1}^{p+1} i_\ell +j_\ell=2k,\, \sum_{\ell=1}^{p+1}\min\{ i_\ell, 1\} + \sum_{\ell=1}^{p+1}\min\{j_\ell,1\} \ge 4 \Big\}\\
\mathcal G^k := \Big\{(i,j) \in \mathbb N^{p+1} \times \mathbb N^{p+1}:\,\, i_1 \ge i_2 \ge \ldots \ge i_{p+1},\\ j_1 \ge j_2 \ge \ldots \ge j_{p+1},
\sum_{\ell =1}^{p+1} i_\ell+ j_\ell =2k-2,\, i_1, \, j_1\le k-1\Big\}.
\end{multline*}
\end{definition}
\begin{remark}We emphasize that the last condition in the definition of $\mathcal D^k$ makes densities in $\Omega_k$ having in itself products of (at least) four nontrivial derivatives, while the last condition in the definition of $\mathcal C^k$ and $\mathcal G^k$ means that densities in $\Theta_k$ admit $k-1$ as the highest possible number of derivatives. 
%Hence, terms laying in $\Omega_k$ or in $\Theta_k$ are considered good.
\end{remark}
Next we introduce special densities 
%{\color{red}belonging to $\Theta_k$}
 that appear 
from the time-derivative of $\|\partial_x^k u(t)\|^2_{L^{2}}$ (see Lemma \ref{Hk}). 
\begin{definition}
Let $k=3m+r$, for some $m \in \mathbb{N}$ and $r \in\{0,1,2\}$. For $h=0,\dots, m$, we define the following densities:
\begin{equation}\label{e:basic-densities}
  \left\{
    \begin{aligned}
\mathcal I_{k,h}(u):= & \mathrm{Im}\int \big(\partial_x^{k-h} u\big)^2\, \partial_x^{2h} u\, u^{p-2}\, \bar u^{p+1},\\
\mathcal K_{k,h}(u):= &\mathrm{Im}\int \big(\partial_x^{k-h} u\big)^2\, \partial_x^{2h} \bar u\, u^{p-1}\, \bar u^{p},
%\\
%\mathcal K_{k,h}(u):= \mathrm{Im}\int \big(\partial_x^{k-h} u\big)^2\, \partial_x^{2h} \bar u\, u^{p-1}\, \bar u^{p},
\\
\mathcal V_{k,h}(u):= &\mathrm{Im}\int \partial_x^{k-h} u\, \partial_x^{k-h-1}\bar u\,\partial_x^{2h+1} u\, u^{p-1}\, \bar u^{p},
\\
\mathcal W_{k,h}(u):= &\mathrm{Im}\int \partial_x^{k-h} u\, \partial_x^{k-h-1}\bar u\,\partial_x^{2h+1}\bar u\, u^{p}\, \bar u^{p-1}.
\end{aligned}
\right.
\end{equation}
\end{definition}
%\begin{remark}\label{equal}
%Notice that not all the densities above are independent, for instance one can check that $\mathcal I_{k,0}(u)=\mathcal K_{k,0}(u)$.
%We shall exploit in the sequel many of the relations satisfied by the densities above, however we prefer to \emph{not} detail all these occurrences, specifying the relations between the densities at the moment when we take advantage of them.
%\end{remark}

We define now some specific densities belonging to $\Gamma_k$ (see definition \ref{defgammak}) 
that we shall use to complete the quantity
$\|u\|_{L^2}^2+\|\partial_x^k u\|_{L^2}^2$ to the energy $\mathcal E_k$.
\begin{definition}
  Let $k=3m +r$, with $m\in \mathbb{N}$ and $r\in \{0,1,2\}$. For $h=1,\dots, m+1$, we define
  \begin{equation}
    \label{e:corrections}\left\{
\begin{aligned}
\tilde{\mathcal I}_{k,h}(u):= &\mathrm{Re}\int \big(\partial_x^{k-h} u\big)^2\, \partial_x^{2h-2}u\, \bar u^{p+1}\, u^{p-2},\\
\tilde{\mathcal K}_{k,h}(u):=& \mathrm{Re}\int \big(\partial_x^{k-h} u\big)^2\, \partial_x^{2h-2}\bar u\, \bar u^{p}\, u^{p-1},
\\
\tilde{\mathcal V}_{k,h}(u):= &\mathrm{Re} \int |\partial_x^{k-h}u|^2\, \partial_x^{2h-2}u\, \bar u |u|^{2(p-1)},\\
%\\
%&\qquad \qquad {\color{red} =2\int |\partial_x^{k-h} u|^2\, \mathrm{Re}\big(\partial_x^{2h-2} u\, \bar u\big)\,|u|^{2(p-1)},}
\tilde{\mathcal W}_{k,h}(u):= &\mathrm{Re}\int \partial_x^{k-h} u\,\partial_x^{k-h-1} \bar u\, \partial_x^{2h-1}u\, \bar u^{p}\, u^{p-1}.
\end{aligned}\right.
\end{equation}
\end{definition}

\begin{remark}\label{remequalterms}
It is worth noting that the densities in \eqref{e:basic-densities} and in \eqref{e:corrections} may be redundant.
For instance $\mathcal I_{k,0}(u)=\mathcal K_{k,0}(u)$ and $\tilde{\mathcal I}_{2,1}=\tilde{\mathcal K}_{2,1}=\tilde{\mathcal W}_{2,1}$, or some of them can be written as linear combination of others. However, we prefer to \emph{not} detail all these occurrences, specifying the relations between the densities at the moment when we take advantage of them.
\end{remark}
We now introduce the modified energy $\mathcal E_k(u)$ of Theorem \ref{main}
and we describe its main properties.
\begin{proposition}
\label{prop:structure}
Let $k=3m+r$, with $m \in \mathbb{N}$ and $r \in\{0,1,2\}$, with $k\ge 2$. Let $p \ge 2$ be an integer and assume that $u$ is a solution of \eqref{NLSt0}.
Then, there exist $\tilde \alpha_h, \tilde \beta_h, \tilde \gamma_h\in \mathbb R$, with $h =0,\ldots , m$ and
$\tilde \delta_h\in \mathbb R$ with $h =1,\ldots , m$ such that, after we set
\begin{multline}\label{defEk}
\mathcal{E}_k(u):= \|u\|_{H^k}^2 + \sum_{h=0}^{m} \left(\tilde{\alpha}_h\, \tilde{\mathcal I}_{k,h+1}(u) +\tilde{\beta}_h\, \tilde{\mathcal V}_{k,h+1}(u)+\tilde{\gamma}_h\, \tilde{\mathcal W}_{k,h+1}(u)\right)
\\+\sum_{h=1}^{m} \tilde{\delta}_h\, \tilde{\mathcal K}_{k,h+1}(u),
\end{multline}
%where $\tilde{\mathcal I}_{k,h+1}, \tilde{\mathcal K}_{k,h+1}, \tilde{\mathcal V}_{k,h+1}$, and  $\tilde{\mathcal W}_{k,h+1}$ are given in \eqref{e:corrections}, 
the following holds:
\begin{itemize}
\item when $k=3m+1$ or $k=3m+2$:
\begin{equation}
\label{e:sharp-estimate}
\frac{d}{dt}\mathcal E_k(u) = \sum_{(i,j)\in \mathcal D^k}\lambda_{(i,j)}\, \mathcal J_{(i,j)}(u) + \sum_{(i,j)\in \mathcal C^k}\mu_{(i,j)} \, \mathcal J_{(i,j)}(u),
\end{equation}
for some real numbers $\lambda_{(i,j)}$ and  $\mu_{(i,j)}$;
\item when $k=3m$:
\begin{multline}
\label{e:estimate-with-remains}
\frac{d}{dt}\mathcal E_k(u) = c\, \mathrm{Im}\int \big(\partial_x^{2m} u\big)^3\, u^{p-2}\, \bar u^{p+1} \\
+ \sum_{(i,j)\in \mathcal D^k}{\tilde\lambda}_{(i,j)}\, \mathcal J_{(i,j)}(u) + \sum_{(i,j)\in \mathcal C^k}{\tilde\mu}_{(i,j)} \, \mathcal J_{(i,j)}(u),
\end{multline}
for some real numbers $c$, ${\tilde \lambda}_{(i,j)}$ and  ${\tilde \mu}_{(i,j)}$.
\end{itemize}
\end{proposition}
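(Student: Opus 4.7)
The plan is to compute $\frac{d}{dt}\mathcal E_k(u)$ along a solution of \eqref{NLSt0} and to choose the coefficients $\tilde\alpha_h,\tilde\beta_h,\tilde\gamma_h,\tilde\delta_h$ so that every contribution not already in $\Omega_k\cup\Theta_k$ is annihilated. I would first compute $\frac{d}{dt}\|\partial_x^k u\|_{L^2}^2$; the linear part $i\partial_x^2 u$ gives zero after integration by parts, leaving
\[
\frac{d}{dt}\|\partial_x^k u\|_{L^2}^2 = -2\,\mathrm{Im}\int \partial_x^k u \,\partial_x^k\!\bigl(\bar u\,|u|^{2p}\bigr).
\]
Applying the Leibniz rule, any term distributing the $k$ derivatives among at least four of the $2p+1$ factors of $\bar u|u|^{2p}$ automatically belongs to $\Omega_k$, while any term where one factor carries all $k$ derivatives can be moved into $\Theta_k$ by one integration by parts (lowering the top index to $k-1$). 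I would package this reduction as a preliminary lemma; what remains is precisely a linear combination of the three-differentiated-factor basic densities $\mathcal I_{k,h},\mathcal K_{k,h},\mathcal V_{k,h},\mathcal W_{k,h}$ for $h=0,\dots,m$.

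Next I would differentiate each correction $\tilde{\mathcal I}_{k,h+1},\tilde{\mathcal V}_{k,h+1},\tilde{\mathcal W}_{k,h+1},\tilde{\mathcal K}_{k,h+1}$ in time using $\partial_t u = i\partial_x^2 u - i u|u|^{2p}$. The nonlinear piece of $\partial_t$ replaces one $u$ by $u|u|^{2p}$, raising the factor count to $2p+1$ of each sign while keeping the derivative count at $2k-2$; it therefore lands in $\Theta_k$ after Leibniz. The linear piece raises one order from $k-h-1$ to $k-h+1$, and a single integration by parts onto the paired factor of order $k-h-1$ yields $a_h\,\mathcal X_{k,h}$ plus basic densities of strictly smaller $h$ and good terms in $\Omega_k\cup\Theta_k$, where $\mathcal X$ corresponds to $\tilde{\mathcal X}\in\{\tilde{\mathcal I},\tilde{\mathcal V},\tilde{\mathcal W},\tilde{\mathcal K}\}$. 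Cancelling all basic densities in $\frac{d}{dt}\mathcal E_k(u)$ then reduces to inverting a triangular linear system in the $4m+3$ unknowns (the count $4(m+1)$ is reduced by one using the identity $\mathcal I_{k,0}=\mathcal K_{k,0}$ from Remark \ref{remequalterms}), whose diagonal entries $a_h$ can be computed explicitly.

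The system is solvable exactly when no diagonal entry vanishes, and this is where the dichotomy $k=3m$ versus $k\neq 3m$ enters: inspecting the derivative orders, a vanishing $a_h$ can only occur when the three differentiated factors coincide in order, i.e.\ when $k-h=2h$, so $k=3h$. If $k=3m+1$ or $k=3m+2$ no such $h\in\{0,\dots,m\}$ exists, the system is invertible, and \eqref{e:sharp-estimate} follows. If $k=3m$, the index $h=m$ produces one zero diagonal entry and the resonant density $\mathrm{Im}\int (\partial_x^{2m}u)^3 u^{p-2}\bar u^{p+1}$ cannot be eliminated by this scheme; it survives in $\frac{d}{dt}\mathcal E_k$ with some coefficient $c\in\R$, producing \eqref{e:estimate-with-remains}. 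The main obstacle I anticipate is keeping the bookkeeping under control: each integration by parts has to be tracked so that it either genuinely lowers $h$ (feeding into the triangular system) or creates a fourth differentiated factor (landing in $\Omega_k$), and the various identities among the $\mathcal I,\mathcal K,\mathcal V,\mathcal W$ families highlighted in Remark \ref{remequalterms} must be used consistently so that the number of free parameters exactly matches the number of independent basic densities.
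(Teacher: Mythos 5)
Your proposal is essentially the paper's own proof: reduce $\tfrac{d}{dt}\|\partial_x^k u\|_{L^2}^2$ modulo $\Omega_k+\Theta_k$ to the basic densities, compute the $\ast$-parts of the corrections, and invert a (block) lower triangular linear system whose unique degenerate diagonal entry occurs exactly at the resonance $k-h=2h$ with $h=m$, $k=3m$, leaving the cubic term $\mathrm{Im}\int(\partial_x^{2m}u)^3u^{p-2}\bar u^{p+1}$ uncancelled. Two bookkeeping points should be corrected in the write-up. First, a Leibniz term in which one factor carries all $k$ derivatives has total derivative count $2k$ and therefore cannot be ``moved into $\Theta_k$'' (which requires total count $2k-2$); such terms are either zero (e.g.\ $\mathrm{Im}\int|\partial_x^k u|^2|u|^{2p}$) or are precisely the $h=0$ basic densities $\mathcal I_{k,0}=\mathcal K_{k,0}$, and they must be fed into the linear system rather than discarded — your next sentence does include $h=0$, so the conclusion stands, but the justification as stated is wrong. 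Second, the $\ast$-parts of the corrections do not only produce densities of strictly smaller index: e.g.\ $\tilde{\mathcal I}_{k,h+1}^\ast\equiv 2\mathcal I_{k,h}-2\mathcal I_{k,h+1}$ and $\tilde{\mathcal W}_{k,h+1}^\ast\equiv 2\mathcal V_{k,h}-2\mathcal W_{k,h}-2\mathcal V_{k,h+1}$ contain terms of index $h+1$ and mix the $\mathcal V$/$\mathcal W$ families, so the resulting matrix is lower triangular only in the block sense (with $2\times2$ blocks, and a $4\times4$ block at the bottom when $k=3m$); the blocks are still invertible, so the argument goes through as in the paper.
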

\begin{remark}
Using the notations introduced above, the relevant features of 
the energy ${\mathcal E}_k(u)$ can be summarized as follows:
$${\mathcal E}_k(u)-\|u\|_{H^k}^2 \in \Gamma_k$$
and
$$
\frac{d}{dt}\mathcal E_k(u) \in \Omega_k + \Theta_k, \quad k=3m+1 \hbox{ or } 3m+2$$
$$
\frac{d}{dt}\mathcal E_k(u)- c\,  \mathrm{Im}\int \big(\partial_x^{2m} u\big)^3\, u^{p-2}\, \bar u^{p+1} \in \Omega_k + \Theta_k, \quad k=3m.$$
\end{remark}
\noindent Next we define our last useful densities.
\begin{definition} Given a density
\begin{equation}\label{Ju}
\mathcal J(u)=\int  \partial_x^{i_1} u\, \cdots \partial_x^{i_n}u \, \partial_x^{j_1}\bar u\cdots \partial_x^{j_n}\bar u,\end{equation}
we define $\mathcal J^*(u)$ and $\mathcal J^{**}(u)$ as the densities resulting by considering the time derivative of the density $\mathcal J$ and by replacing $\partial_t u$ with $i\,\partial_x^2 u$ and $\partial_t \bar u$ with $-i\, \partial_x^2 \bar u$. Similarly $\mathcal J^{**}$ is obtained as $\mathcal J^*$, but by replacing $\partial_t u$ with $-i\,|u|^{2p}\,u$ and $\partial_t \bar u$ with $i\,|u|^{2p}\, \bar u$. 
\end{definition}
\noindent These definitions make the following identity true, if  $u$ satisfies \eqref{NLSt0}:
\begin{equation}
\label{e:asts}
\frac{d}{dt} \mathcal{J}(u)=\mathcal{J}^{\ast}(u) +
\mathcal{J}^{\ast\ast}(u).
\end{equation}
We conclude by the following useful notation that will be used along the rest of the paper. 
\begin{notation}
For any couple of densities
$\mathcal D_1(u)$ and $\mathcal D_2(u)$ which are respectively linear combination of terms of the type \eqref{Ju} we will use the following equivalence
$${\mathcal D}_1(u)\equiv {\mathcal D}_2(u) \iff {\mathcal D}_1(u)- {\mathcal D}_2(u)
\in \Omega_k+\Theta_k.$$
\end{notation}
\begin{remark}
Following this notation we have that by \eqref{e:sharp-estimate}
$$\frac d{dt} \mathcal E_k(u)\equiv 0 \hbox{ for } k=3m+1, 3m+2$$
and by 
\eqref{e:estimate-with-remains}
$$\frac d{dt} \mathcal E_k(u)\equiv c\, \mathrm{Im}\int \big(\partial_x^{2m} u\big)^3\, u^{p-2}\, \bar u^{p+1} \hbox{ for } k=3m.$$
\end{remark}
%
%Hence, for instance, if
%\[
%\mathcal J(u)= \int \partial_x^{i_1} u\, \partial_x^{i_2} \bar u \, \partial_x^{i_3}u\, u^{p-1}\, \bar u^p,
%\]
%for some $i_1, i_2, i_3 \ge 1$, we get
%\begin{multline*}
%\mathcal J^\ast(u)=i\, \int \partial_x^{i_1+2} u\, \partial_x^{i_2} \bar u\, \partial_x^{i_3} u\, u^{p-1}\, \bar u^p - i\, \int \partial_x^{i_1+2} u\, \partial_x^{i_2+2} \bar u\, \partial_x^{i_3} u\, u^{p-1}\, \bar u^p
%\\
%+ i\, \int \partial_x^{i_1} u\, \partial_x^{i_2} \bar u\, \partial_x^{i_3+2} u\, u^{p-1}\, \bar u^p + \mbox{ terms in $\Omega_k$}.
%\end{multline*}
%and
%\begin{multline*}
%\mathcal J^{\ast\ast}(u)=-i\, \int \partial_x^{i_1} \big(u^{p+1}\, \bar u\big)\, \partial_x^{i_2} \bar u\, \partial_x^{i_3} u\, u^{p-1}\, \bar u^p + i\, \int \partial_x^{i_1+2} u\, \partial_x^{i_2} \big(\bar u^{p+1}\,  u\big)\, \partial_x^{i_3} u\, u^{p-1}\, \bar u^p
%\\
%- i\, \int \partial_x^{i_1} u\, \partial_x^{i_2} \bar u\, \partial_x^{i_3}\big(u^{p+1}\, \bar u\big)\, u^{p-1}\, \bar u^p  -(p-1)\,i\,\int \partial_x^{i_1} u\, \partial_x^{i_2} \bar u \, \partial_x^{i_3}u\,|u|^{2(2p-1)}\, \bar u 
%\\
%+p\,i\, \int \partial_x^{i_1} u\, \partial_x^{i_2} \bar u \, \partial_x^{i_3}u \,|u|^{2(2p-1)}\, \bar u 
%\\
%= i\, \int \partial_x^{i_1} u\, \partial_x^{i_2} \bar u \, \partial_x^{i_3}u\,|u|^{2(2p-1)}\, \bar u + \mbox{ terms in $\Omega_k$}.
 %\end{multline*}
\subsection{Proof of Proposition \ref{prop:structure}}

We need several lemmas.

\begin{lemma}
\label{Hk}
Let $k=3m+r$, with $m \in \mathbb{N}$, $r \in\{0,1,2\}$, $k\ge 2$. Assume that $u(t,x)$ is a smooth solution of \eqref{NLSt0}. 
Then, there exist $\alpha_h, \beta_h, \gamma_h\in \mathbb{R}$ for $h=0,\ldots, m$ and 
$\delta_h \in \mathbb{R}$
for $h=1,\ldots, m$, $\lambda_{(i,j)}\in \mathbb{R}$ for $(i,j)\in \mathcal{D}^k$ such that:
\begin{multline}
\label{e:der}
\frac{d}{dt} \|\partial_x^k u\|_{L^2}^2 =
 \sum_{h=0}^{m}\Big(\alpha_h\, \mathcal I_{k,h}(u)+ \beta_h\, \mathcal V_{k,h}(u)+ \gamma_h\, \mathcal W_{k,h}(u) \Big)\\
 +\sum_{h=1}^m \delta_h\, \mathcal K_{k,h}(u)
+\sum_{(i,j)\in\mathcal{D}^k}\lambda_{(i,j)}\, \mathcal J_{(i,j)}(u).
\end{multline}
\end{lemma}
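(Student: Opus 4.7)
The plan is to differentiate $\|\partial_x^k u\|_{L^2}^2$ in time using the NLS equation \eqref{NLSt0}, then carefully classify the resulting densities by their index structure.

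First, using the equation $i\partial_t u=-\partial_x^2 u+u|u|^{2p}$, a direct computation gives
\[
\frac{d}{dt}\|\partial_x^k u\|_{L^2}^2
= 2\,\mathrm{Re}\int \partial_x^k(\partial_t u)\,\overline{\partial_x^k u}
= 2\,\mathrm{Im}\int \partial_x^k(u^{p+1}\bar u^p)\,\overline{\partial_x^k u}\,,
\]
because the linear part contributes $-2\,\mathrm{Re}\bigl(i\,\|\partial_x^{k+1}u\|_{L^2}^2\bigr)=0$ after one integration by parts.

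Next I expand $\partial_x^k(u^{p+1}\bar u^p)$ by the Leibniz rule, yielding a finite linear combination of products $\partial_x^{i_1}u\cdots\partial_x^{i_{p+1}}u\,\partial_x^{j_1}\bar u\cdots\partial_x^{j_p}\bar u$ indexed by multi-indices with $\sum i_\ell+\sum j_\ell=k$. Multiplication by $\overline{\partial_x^k u}$, integration, and taking imaginary parts converts each such product into a density $\mathcal J_{(\mathbf i,\mathbf j)}(u)$ with $\mathbf i=(i_1,\dots,i_{p+1})$, $\mathbf j=(j_1,\dots,j_p,k)$, with $p+1$ entries on each side and total weight $2k$; this is precisely the shape underlying the set $\mathcal D^k$.

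The classification then proceeds according to the number of nontrivial indices, i.e.\ those $\ge 1$. Every density has at least one nontrivial index of size $k$ coming from the $\overline{\partial_x^k u}$ factor, so there are three possibilities. If the total number of nontrivial indices is $\ge 4$, the pair $(\mathbf i,\mathbf j)$ already lies in $\mathcal D^k$ by its last defining condition, and contributes to the $\sum_{(i,j)\in\mathcal D^k}\lambda_{(i,j)}\mathcal J_{(i,j)}$ sum. If there are exactly two nontrivial indices, both must equal $k$: either the second one lands on a $u$-factor (producing the real integrand $|u|^{2p}|\partial_x^k u|^2$, killed by $\mathrm{Im}$) or on a $\bar u$-factor (producing $u^{p+1}\bar u^{p-1}(\partial_x^k\bar u)^2$, which by complex conjugation equals $-\mathcal I_{k,0}(u)=-\mathcal K_{k,0}(u)$). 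If there are exactly three nontrivial indices, the two ``extra'' ones sum to $k$ and sit either both on the $u$-side, one on each side, or both on the $\bar u$-side; in each configuration I perform an integration by parts to push $k-2h$ derivatives from $\overline{\partial_x^k u}$ onto a neighbouring factor, producing a canonical term $\mathcal I_{k,h}$, $\mathcal K_{k,h}$, $\mathcal V_{k,h}$, or $\mathcal W_{k,h}$ for some $h\in\{0,\dots,m\}$ (with $h\ge 1$ in the $\mathcal K$ case, since $\mathcal K_{k,0}=\mathcal I_{k,0}$), plus a remainder belonging to $\Omega_k$. The ordering constraint $k-h\ge 2h$ baked into the definitions forces $h\le m$, matching the range of summation in \eqref{e:der}.

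The main obstacle lies in this last step. The raw Leibniz terms of the form $\partial_x^{k-h}u\,\partial_x^h u\cdot(\dots)\cdot\overline{\partial_x^k u}$ carry distinct derivative levels $h\ne k-h$, whereas the densities $\mathcal I_{k,h},\mathcal K_{k,h}$ carry the repeated factor $(\partial_x^{k-h}u)^2$. Producing this doubled index requires moving exactly $k-2h$ derivatives from $\overline{\partial_x^k u}$ onto $\partial_x^h u$ by integration by parts, exploiting the fact that under $\mathrm{Im}\int$ the purely real cross-terms (those with a $|\partial_x^{\bullet}u|^2$ structure) vanish. One has to check, case by case and for all three placements above, that every side-contribution generated by the Leibniz expansion of $\partial_x^{k-2h}(\dots)$ carries at least four nontrivial derivatives — hence falls into $\Omega_k$ — and then collect real coefficients to identify the $\alpha_h,\beta_h,\gamma_h,\delta_h$ appearing in \eqref{e:der}. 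The bookkeeping is finite and entirely combinatorial.
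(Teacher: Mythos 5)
Your proposal is correct and follows essentially the same route as the paper: differentiate $\|\partial_x^k u\|_{L^2}^2$, kill the linear part, Leibniz-expand the nonlinear contribution, discard into $\Omega_k$ everything with at least four nontrivial derivative indices, and symmetrize the remaining three-index terms by integration by parts into the canonical densities $\mathcal I_{k,h},\mathcal K_{k,h},\mathcal V_{k,h},\mathcal W_{k,h}$ with $h\le m$, noting $\mathcal K_{k,0}=\mathcal I_{k,0}$. The case-by-case bookkeeping you defer at the end (placement of the two extra derivatives on the $u$- or $\bar u$-side, and the check that integration-by-parts remainders acquire a fourth nontrivial derivative) is exactly the content of the paper's proof and does go through as you describe.
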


\begin{proof} We start by noticing that in the r.h.s. in \eqref{e:der} we have the sum on $\mathcal K_{k,h}(u)$ running from $h=1$ to $h=m$,
while on the other terms the sum includes $h=0$. This is due to the fact that, as noticed in remark \ref{remequalterms}, we have
that the term $\mathcal K_{k,0}(u)$ is equal to $\mathcal I_{k,0}(u)$ hence the contribution given by $\mathcal K_{k,0}(u)$
can be absorbed by $\mathcal I_{k,0}(u)$. Next we prove \eqref{e:der}.
By using the equation solved by $u(t,x)$ we get:
\begin{multline*}
\frac{d}{dt} \|\partial_x^k u\|_{L^2}^2 = 2\, \mathrm{Re} \int \partial_x^k u \, \partial_x^k \partial_t \bar u
\\=- 2 \, \mathrm{Re} \, i \int \partial_x^k u \, \partial_x^{k+2} \bar u + 2 \, \mathrm{Re} \, i \int \partial_x^k u \, \partial_x^k \big(u^p \, \bar u^{p+1}\big) 
%\\
%&=2\, \mathrm{Re} \, i\int \partial_x^{k+1} u\, \partial_x^{k+1}\bar u +  2 \, \mathrm{Re} \, i \int \partial_x^k u \, \partial_x^k \big(u^p \, \bar u^{p+1}\big)
=-2\, \mathrm{Im}\int \partial_x^k u \, \partial_x^k \big(u^p \, \bar u^{p+1}\big).
\end{multline*}
If we develop $\partial_x^k \big(u^p \, \bar u^{p+1}\big)$
then we write $\mathrm{Im}\int \partial_x^k u \, \partial_x^k \big(u^p \, \bar u^{p+1}\big)$ as linear combination of several terms, some of them belonging to $\Omega_k$ (and hence absorbed in the last term of 
\eqref{e:der}) while the others are linear combinations of terms of the type 
\begin{equation}\label{termsabo}\mathrm{Im} \, \int (\partial_x^k u \partial_x^\beta v_1 \partial^\gamma_x v_2) \times \prod_{j=3}^{2p+1} v_j
\hbox{ where } v_j\in\{u, \, \bar u\}, \quad k+\beta+\gamma=2k\end{equation}
and moreover the number of factors involving $u$ is equal to the number of factors involving $\bar u$. 
By using integration by parts we have that the terms in \eqref{termsabo}, up to terms that belong to $\Omega_k$ (so absorbed in the last term in \eqref{e:der}), are linear combinations of terms of the type
\begin{equation}\label{...}\mathrm{Im} \, \int (\partial_x^{\alpha_1+1} u \partial_x^{\alpha_1} v_1 \partial^{\gamma_1}_x v_2 ) \times \prod_{j=3}^{2p+1} v_j, \quad 
\alpha_1\geq \gamma_1\geq 0, \quad 2\alpha_1+\gamma_1+1=2k\end{equation}
or of the following type
\begin{equation}\label{......}\mathrm{Im} \, \int (\partial_x^{\alpha_2} u \partial_x^{\alpha_2} v_1  \partial^{\gamma_2}_x v_2 )\times \prod_{j=3}^{2p+1} v_j, \quad
\alpha_2\geq \gamma_2\geq 0, \quad 2\alpha_2+\gamma_2=2k.\end{equation}
We split now in several cases.
\\
{\bf First case: $\alpha_1> \gamma_1$ in \eqref{...}, $\alpha_2> \gamma_2$ in \eqref{......}.} 
\\
Notice that concerning the terms in \eqref{...} we have two possibilities: either
$v_1=u$ or $v_1=\bar u$. In the first case we can argue  
by integration by parts and \eqref{...} can be written, up to terms belonging to $\Omega_k$ and up to a multiplicative factor, as 
$\Im \int ((\partial_x^{\alpha_1} u)^2 \partial_x^{\gamma_1+1} v_2) \times \prod_{j=3}^{2p+1} v_j$
which is a term of the type $\mathcal I_{k,h}(u)$ or $\mathcal K_{k,h}(u)$ depending on $v_2$, with $h=\frac{\gamma_1+1}2$. 
Notice that by the condition $2\alpha_1+\gamma_1+1=2k$ 
we get that $h$ is an integer. Moreover necessarily $h\leq m$, in fact by the conditions in \eqref{...} we get $2\alpha_1+\gamma_1+1=6m + 2r$ and $\alpha_1>\gamma_1$ 
which imply $2h-1 + h < \alpha_1 + h=3m + r$. In the second case $v_1=\bar u$ we get that \eqref{...} reduces to a term of the type
$\mathcal W_{k,h}(u)$ or $\mathcal V_{k,h}(u)$ depending on $v_2$, with $2h+1=\gamma_1$. Arguing as above also in this case necessarily $h\leq m$.
Next we focus on \eqref{......} by considering again two cases: either $v_1=u$ or $v_1=\bar u$. In the case $v_1=u$ we have \eqref{......}  belongs to $\mathcal I_{k,h}(u)$ or $\mathcal K_{k,h}(u)$ depending on $v_2$, with $2h=\gamma_2$
and also in this case one can check $h\leq m$.
In the case $v_1=\bar u$ we have two more possibilities: either $\gamma_2=0$ and in this case the term \eqref{......} is equal to zero since we consider the imaginary part of a real number (remember that the amount of factors $u$ is equal to
the amount of factors $\bar u$) or we get $\gamma_2>0$ and we can integrate by parts, then up to extra terms in $\Omega_k$
we get that \eqref{......} becomes
$$-\mathrm{Im} \, \int (\partial_x^{\alpha_2} u \partial_x^{\alpha_2+1} \bar u  \partial^{\gamma_2-1}_x v_2) \times \prod_{j=3}^{2p+1} v_j
-\mathrm{Im} \, \int (\partial_x^{\alpha_2+1} u \partial_x^{\alpha_2} \bar u  \partial^{\gamma_2-1}_x v_2) \times \prod_{j=3}^{2p+1} v_j$$
which is a linear combination of terms belonging to 
$\mathcal W_{k,h}(u)$ or $\mathcal V_{k,h}(u)$ depending on $v_2$ with $2h+1=\gamma_2-1$ and also in this case we get $h\leq m$.
\\
{\bf Second case: $\alpha_1=\gamma_1$ in \eqref{...}, $\alpha_2=\gamma_2$ in \eqref{......}.} 
\\
The expression \eqref{...} reduces to 
$$\mathrm{Im} \, \int( \partial_x^{\alpha_1+1} u \partial_x^{\alpha_1} v_1 \partial^{\alpha_1}_x v_2) \times \prod_{j=3}^{2p+1} v_j.$$
It is easy to see that if $v_1=v_2=u$ then by integration by parts we get a term belonging to $\Omega_k$.
In case $v_1=v_2=\bar u$ then we get  $\mathcal W_{k,h}(u)$ with $2h+1=\alpha_1$. It is easy to check, by using the condition on $\alpha_1$, that in this case $h$ is an integer and $h\leq m$.
Finally in the case $v_1$ and $v_2$ different then we get $\mathcal W_{k,h}(u)$ with $2h+1=\alpha_1$, and again $h$  in an integer with $h\leq m$. 
Next we focus on the term \eqref{......} that reduces to
$$\mathrm{Im} \, \int (\partial_x^{\alpha_2} u \partial_x^{\alpha_2} v_1  \partial^{\alpha_2}_x v_2) \times \prod_{j=3}^{2p+1} v_j$$
and in this case we get either a term $\mathcal I_{k,h}(u)$ or a term $\mathcal K_{k,h}(u)$ with $2h=\alpha_2$. It is easy to check that necessarily $h$ is an integer and $h\leq m$.
\end{proof}

\begin{lemma}
\label{lem:1}
For any given $k>1$ we have
\begin{equation} 
\label{e:ast-to-worse1}
{\tilde{\mathcal I}_{k,1}^\ast}(u)\equiv 2\, \mathcal I_{k,0}(u)-2(p-1)\, \mathcal I_{k,1}(u),\quad 
{\tilde{\mathcal V}_{k,1}^\ast}(u)\equiv  4p\, \mathcal V_{k,0}(u).
\end{equation}
Moreover for $k>2$ we have
\begin{equation}\label{e:ast-to-worse12}
{\tilde{\mathcal W}_{k,1}^\ast}(u)\equiv 2\,\mathcal V_{k,0}(u) - 2\,\mathcal W_{k,0}(u) -2\, \mathcal V_{k,1}(u).
\end{equation}
%{\color{red}
%In the case $k=2$ we have 
%\begin{equation}\label{e:ast-to-worse123}
%{\tilde{\mathcal W}_{2,1}^\ast}(u)\equiv 2 \, {\mathcal I}_{2,0}(u) +
%2(p+1)  \, {\mathcal V}_{2,0}(u)- (p+1)\, {\mathcal W}_{2,0}(u) .
%\end{equation}
%}
\end{lemma}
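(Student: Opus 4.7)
The plan is to compute each of the three $*$-derivatives directly from the definition: differentiate the density in time and replace every $\partial_t u$ by $i\partial_x^2 u$ and every $\partial_t \bar u$ by $-i\partial_x^2 \bar u$. For each of $\tilde{\mathcal I}_{k,1}$, $\tilde{\mathcal V}_{k,1}$, $\tilde{\mathcal W}_{k,1}$ this yields one contribution per factor, and every such contribution is either \emph{high} (containing a $\partial_x^{k+1}$ factor, from a $\partial_x^{k-1}$-factor being differentiated) or \emph{middle} (containing a $\partial_x^2$ factor on a previously bare $u$ or $\bar u$). One then integrates by parts the high contributions to reduce their order to $k$, identifies the surviving densities with the targets $\mathcal I_{k,0}$, $\mathcal K_{k,0}$, $\mathcal V_{k,0}$, $\mathcal W_{k,0}$, $\mathcal I_{k,1}$, $\mathcal K_{k,1}$, $\mathcal V_{k,1}$, and discards whatever lies in $\Omega_k+\Theta_k$. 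Throughout, the identity $\Im\int f = -\Im\int\bar f$ is used to pair conjugate contributions.

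For $\tilde{\mathcal I}_{k,1}^*$, two of the three contributions are immediately $-(p-1)\mathcal I_{k,1}$ and $(p+1)\mathcal K_{k,1}$; integrating by parts the $\partial_x^{k+1}$ contribution yields $2\mathcal I_{k,0}$ plus a three-factor remainder which, after a secondary integration by parts, produces $-(p-1)\mathcal I_{k,1}-(p+1)\mathcal K_{k,1}$ up to $\Omega_k$; the $(p+1)\mathcal K_{k,1}$ terms cancel and exactly $2\mathcal I_{k,0}-2(p-1)\mathcal I_{k,1}$ remains. For $\tilde{\mathcal V}_{k,1}(u)=\int|\partial_x^{k-1}u|^2|u|^{2p}$ the four contributions pair by conjugation into two: the two $\partial_x^{k+1}$ contributions combine after integration by parts into $2p\mathcal V_{k,0}+2p\mathcal W_{k,0}$, while the two $\partial_x^2$ contributions combine into $2p\mathcal V_{k,0}-2p\mathcal W_{k,0}$, so the $\mathcal W_{k,0}$ parts cancel and only $4p\mathcal V_{k,0}$ survives.

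The analysis of $\tilde{\mathcal W}_{k,1}^*$ is longer and is where the main obstacle lies. Three of the five contributions are easy: the two $\partial_x^2$ contributions on $u^{p-1}$ and $\bar u^p$ fall directly in $\Omega_k$ (four nontrivial derivative factors already present), the $\partial_x^k\bar u$ contribution equals $-\mathcal W_{k,0}$ by conjugation, and the $\partial_x^3 u$ contribution is directly $-\mathcal V_{k,1}$. The $\partial_x^{k+1}u$ contribution, however, after one integration by parts produces $\mathcal V_{k,0}$ plus a three-factor integral $\int \partial_x^k u\,\partial_x^{k-2}\bar u\,\partial_x^2 u\,u^{p-1}\bar u^p$ which is \emph{not} in $\Omega_k$. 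A second integration by parts, moving $\partial_x$ from $\partial_x^k u=\partial_x(\partial_x^{k-1}u)$ onto the other factors, produces an extra $-\mathcal V_{k,1}$, an $\Omega_k$ remainder, and the three-factor integral $-\int \partial_x^{k-1}u\,\partial_x^{k-1}\bar u\,\partial_x^2 u\,u^{p-1}\bar u^p$---exactly the bad term also arising in the $\tilde{\mathcal V}_{k,1}^*$ analysis, equal there to $\mathcal V_{k,0}-\mathcal W_{k,0}$ modulo $\Omega_k$. Assembling everything yields the claimed $2\mathcal V_{k,0}-2\mathcal W_{k,0}-2\mathcal V_{k,1}$. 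The delicate point throughout is this iterated reduction of bad three-factor remainders, none of which is automatically in $\Omega_k+\Theta_k$: each has to be pushed through a second integration by parts and recognized as a quantity already computed, with all signs aligning exactly for the stated identities to hold.
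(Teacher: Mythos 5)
Your proposal is correct and takes essentially the same approach as the paper: compute each $*$-density directly from the substitution rule, integrate by parts to lower the $\partial_x^{k+1}$ contributions, and reduce the surviving three-derivative-factor remainders (in particular $\Im\int|\partial_x^{k-1}u|^2\,\partial_x^2u\,u^{p-1}\bar u^{p}$) by a further integration by parts, discarding everything in $\Omega_k+\Theta_k$. The cancellations you identify (the $(p+1)\mathcal K_{k,1}$ pair for $\tilde{\mathcal I}_{k,1}^\ast$, the $\mathcal W_{k,0}$ pair for $\tilde{\mathcal V}_{k,1}^\ast$) and the final bookkeeping for $\tilde{\mathcal W}_{k,1}^\ast$ all match the paper's computation.
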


\begin{proof}
We begin with the equivalence involving ${\tilde{\mathcal I}}_{k,1}^\ast$. We have
\begin{multline*}
{\tilde{\mathcal I}_{k,1}^\ast}(u)
=
\left\{
\mathrm{Re}\int \big(\partial_x^{k-1} u\big)^2\, \bar u^{p+1}\, u^{p-1} \right\}^\ast
=-2\,\mathrm{Im}\int \partial_x^{k+1}u\, \partial_x^{k-1}u\, \bar u^{p+1}\, u^{p-1} 
\hbox{ where } 
\\
- \mathrm{Im}\int \big(\partial_x^{k-1} u\big)^2 \big(-(p+1)\, \partial_x^2 \bar u \,u +(p-1)\, \partial_x^2 u \, \bar u\big) \, u^{p-2}\, \bar u^p
\end{multline*}
and hence after integration by parts
\begin{multline*}
{\tilde{\mathcal I}_{k,1}^\ast}(u) =2\, \mathrm{Im}\int \big(\partial_x^{k} u\big)^2\, \bar u^{p+1}\, u^{p-1} 
\\+ 2\, \mathrm{Im}\int \partial_x^{k} u\,\partial_x^{k-1} u\, \big((p+1)\,\partial_x \bar u \, u+(p-1) \, \partial_x u \, \bar u\big)\, u^{p-2}\, \bar u^p
\\
-  \mathrm{Im}\int \big(\partial_x^{k-1} u\big)^2 \big(-(p+1)\, \partial_x^2 \bar u \,u +(p-1)\, \partial_x^2 u \, \bar u\big) \,u^{p-2} \bar u^p.
\end{multline*}
By integrating by parts the second integral we get 
\begin{multline*}
{\tilde{\mathcal I}_{k,1}^\ast}(u) \equiv 2\, \mathrm{Im}\int \big(\partial_x^{k} u\big)^2\, \bar u^{p+1}\, u^{p-1} \\ -\, \mathrm{Im}\int (\partial_x^{k-1} u)^2 \big((p+1)\,\partial_x^2 \bar u \, u+(p-1) \, \partial_x^2 u \, \bar u\big)\, u^{p-2}\, \bar u^p
\\
-  \mathrm{Im}\int \big(\partial_x^{k-1} u\big)^2 \big(-(p+1)\, \partial_x^2 \bar u \,u +(p-1)\, \partial_x^2 u \, \bar u\big) \,u^{p-2} \bar u^p
\end{multline*}
which in turn implies the first equivalence in \eqref{e:ast-to-worse1}.
\smallskip
Analogously we have
\begin{multline*}
  {\tilde{\mathcal V}_{k,1}^\ast}(u)
  =\left\{
\int |\partial_x^{k-1} u|^2 \,|u|^{2p}\right\}^\ast=-2
\mathrm{Im} \int \partial_x^{k+1}u\, \partial_x^{k-1}\bar u\, |u|^{2p}
\\ -2p\,\mathrm{Im} \int |\partial_x^{k-1}u|^2\, |u|^{2(p-1)}\,\partial_x^2 u\, \bar u,
\end{multline*}
which  gives, after integrate by parts
\begin{multline*}
  {\tilde{\mathcal V}_{k,1}^\ast}(u) \equiv  4p\,\mathrm{Im}\int \partial_x^{k}u\, \partial_x^{k-1}\bar u\, \mathrm{Re} \, (\partial_x u\, \bar u) \, |u|^{2(p-1)}  \\+2p\,\mathrm{Im} \int \partial_x (|\partial_x^{k-1}u|^2)\, |u|^{2(p-1)}\,\partial_x u\, \bar u\\
=4p\int \mathrm{Im} \, (\partial_x^{k}u\, \partial_x^{k-1}\bar u)\, \mathrm{Re} \, (\partial_x u\, \bar u) \, |u|^{2(p-1)}  \\+4p \int \mathrm{Re} \; (\partial_x^{k}u\, \partial_x^{k-1} \bar u )\, |u|^{2(p-1)} \,\mathrm{Im} (\partial_x u\, \bar u)
\\
=4p\, \mathrm{Im} \int \partial_x^{k}u\, \partial_x^{k-1}\bar u \, \partial_x u\, \bar u \, |u|^{2(p-1)}  
\end{multline*}
where at the last step we used the identity 
$\mathrm{Im}\, z \, \mathrm{Re} \, w+ \mathrm{Re} \, z \, \mathrm{Im} \, w= \mathrm{Im} \, (z\, w)$
for any couple $z,\, w\in \mathbb C$. Hence we get the second equivalence in \eqref{e:ast-to-worse1}.
\smallskip 
Next we prove \eqref{e:ast-to-worse12} by assuming $k> 2$:
\begin{multline*}
{\tilde{\mathcal W}_{k,1}^\ast}(u)
=\left\{\mathrm{Re}\int \partial_x^{k-1} u\,\partial_x^{k-2} \bar u\, \partial_x u\, \bar u^{p}\, u^{p-1}\right\}^\ast\\\equiv -{\mathrm{Im}}\int\partial_x^{k+1}u\, \partial_x^{k-2}\bar u\, \partial_x u \, \bar u^{p}\, u^{p-1}
\\
\qquad - \mathrm{Im} \int \partial_x^k u\, \partial_x^{k-1}\bar u \, \partial_x \bar u 
\, u^{p}\, \bar u^{p-1} - \mathrm{Im}\int \partial_x^{k-1}u\, \partial_x^{k-2}\bar u\, \partial_x^3 u 
\, \bar u^{p}\, u^{p-1}
\\
\equiv {\mathrm{Im}}\int\partial_x^{k}u\, \partial_x^{k-1}\bar u\, \partial_x u \, \bar u^{p}\, u^{p-1}+
{\mathrm{Im}}\int\partial_x^{k}u\, \partial_x^{k-2}\bar u\, \partial_x^2 u \, \bar u^{p}\, u^{p-1}
\\
\qquad - \mathrm{Im} \int \partial_x^k u\, \partial_x^{k-1}\bar u \, \partial_x \bar u 
\, u^{p}\, \bar u^{p-1} - \mathrm{Im}\int \partial_x^{k-1}u\, \partial_x^{k-2}\bar u\, \partial_x^3 u 
\, \bar u^{p}\, u^{p-1}
\end{multline*}
and by integration by parts
\begin{multline*}
  {\tilde{\mathcal W}_{k,1}^\ast}(u)
\equiv {\mathrm{Im}}\int\partial_x^{k}u\, \partial_x^{k-1}\bar u\, \partial_x u \, \bar u^{p}\, u^{p-1}-
{\mathrm{Im}}\int\partial_x^{k-1}u\, \partial_x^{k-1}\bar u\, \partial_x^2 u \, \bar u^{p}\, u^{p-1}
\\-
{\mathrm{Im}}\int\partial_x^{k-1}u\, \partial_x^{k-2}\bar u\, \partial_x^3 u \, \bar u^{p}\, u^{p-1} - \mathrm{Im} \int \partial_x^k u\, \partial_x^{k-1}\bar u \, \partial_x \bar u 
\, u^{p}\, \bar u^{p-1}\\ - \mathrm{Im}\int \partial_x^{k-1}u\, \partial_x^{k-2}\bar u\, \partial_x^3 u 
\, \bar u^{p}\, u^{p-1}
\end{multline*}
and we continue
\begin{multline*}
  {\tilde{\mathcal W}_{k,1}^\ast}(u)
\equiv {\mathrm{Im}}\int\partial_x^{k}u\, \partial_x^{k-1}\bar u\, \partial_x u \, \bar u^{p}\, u^{p-1}-
{\mathrm{Im}}\int\partial_x^{k-1}u\, \partial_x^{k-1}\bar u\, \partial_x^2 u \, \bar u^{p}\, u^{p-1}
\\-
{\mathrm{Im}}\int\partial_x^{k-1}u\, \partial_x^{k-2}\bar u\, \partial_x^3 u \, \bar u^{p}\, u^{p-1} + \mathrm{Im} \int \partial_x^{k-1} u\, \partial_x^{k}\bar u \, \partial_x \bar u \, u^{p}\, \bar u^{p-1}
\\+ \mathrm{Im} \int \partial_x^{k-1} u\, \partial_x^{k-1}\bar u \, \partial_x^2 \bar u 
\, u^{p}\, \bar u^{p-1} - \mathrm{Im}\int \partial_x^{k-1}u\, \partial_x^{k-2}\bar u\, \partial_x^3 u 
\, \bar u^{p}\, u^{p-1}
\end{multline*}
and again
\begin{multline*}
{\tilde{\mathcal W}_{k,1}^\ast}(u)= {\mathrm{Im}}\int\partial_x^{k}u\, \partial_x^{k-1}\bar u\, \partial_x u \, \bar u^{p}\, u^{p-1}
+2 \mathrm{Im} \int \partial_x^{k-1} u\, \partial_x^{k-1}\bar u \, \partial_x^2 \bar u 
\, u^{p}\, \bar u^{p-1}
\\-
{\mathrm{Im}}\int\partial_x^{k-1}u\, \partial_x^{k-2}\bar u\, \partial_x^3 u \, \bar u^{p}\, u^{p-1} + \mathrm{Im} \int \partial_x^{k-1} u\, \partial_x^{k}\bar u \, \partial_x \bar u \, u^{p}\, \bar u^{p-1}
\\ - \mathrm{Im}\int \partial_x^{k-1}u\, \partial_x^{k-2}\bar u\, \partial_x^3 u 
\, \bar u^{p}\, u^{p-1}.
\end{multline*}
By using again integration by part on the second term
we continue
\begin{multline*}
{\tilde{\mathcal W}_{k,1}^\ast}(u) \equiv {\mathrm{Im}}\int\partial_x^{k}u\, \partial_x^{k-1}\bar u\, \partial_x u \, \bar u^{p}\, u^{p-1}
\\-2 \mathrm{Im} \int \partial_x^{k} u\, \partial_x^{k-1}\bar u \, \partial_x \bar u 
\, u^{p}\, \bar u^{p-1}-2 \mathrm{Im} \int \partial_x^{k-1} u\, \partial_x^{k}\bar u \, \partial_x \bar u 
\, u^{p}\, \bar u^{p-1}
\\-
{\mathrm{Im}}\int\partial_x^{k-1}u\, \partial_x^{k-2}\bar u\, \partial_x^3 u \, \bar u^{p}\, u^{p-1} + \mathrm{Im} \int \partial_x^{k-1} u\, \partial_x^{k}\bar u \, \partial_x \bar u \, u^{p}\, \bar u^{p-1}
\\ - \mathrm{Im}\int \partial_x^{k-1}u\, \partial_x^{k-2}\bar u\, \partial_x^3 u 
\, \bar u^{p}\, u^{p-1}
\end{multline*}
and we conclude the proof of \eqref{e:ast-to-worse12}.
%Finally the proof of \eqref{k2equiv} follows by the definition.
%{\color{red}
%We focus now on \eqref{e:ast-to-worse123} and notice that we have
%\begin{multline*}{\tilde{\mathcal W}_{2,1}^\ast}(u)=
%\left\{\mathrm{Re}\int (\partial_x u)^2\, \bar u^{p+1}\, u^{p-1}\right\}^\ast
%= - 2 \mathrm{Im}  \int \partial_x^3 u \, \partial_x u \, \bar u^{p+1}\, u^{p-1} \\-
%(p-1)
%\mathrm{Im} \int (\partial_x u)^2\, \partial_x^2 u\, \bar u^{p+1}\, u^{p-2}+(p+1)\mathrm{Im} \int (\partial_x u)^2\, \partial_x^2 \bar u \, \bar u^{p}\, u^{p-1}
%\\= 2 \mathrm{Im}  \int \partial_x^2 u \, \partial_x^2 u \, \bar u^{p+1}\, u^{p-1} 
%+2 (p+1)\mathrm{Im}  \int \partial_x^2 u \, \partial_x u \, \partial_x \bar u \, \bar u^{p}\, u^{p-1} 
%\\+2(p-1) \mathrm{Im}  \int \partial_x^2 u \, (\partial_x u)^2 \, \bar u^{p+1}\, u^{p-2}-
%(p-1)
%\mathrm{Im} \int (\partial_x u)^2\, \partial_x^2 u\, \bar u^{p+1}\, u^{p-2}\\+(p+1)\mathrm{Im} \int (\partial_x u)^2\, \partial_x^2 \bar u \, \bar u^{p}\, u^{p-1} 
%\\\equiv 2 \mathrm{Im}  \int \partial_x^2 u \, \partial_x^2 u \, \bar u^{p+1}\, u^{p-1} 
%+2 (p+1)\mathrm{Im}  \int \partial_x^2 u \, \partial_x u \, \partial_x \bar u \, \bar u^{p}\, u^{p-1} 
%\\-(p+1)\mathrm{Im} \int (\partial_x \bar u)^2\, \partial_x^2  u \, u^{p}\, \bar u^{p-1},
%\end{multline*}
%}
%hence the proof of Lemma \ref{lem:1} is complete.
\end{proof}

%\begin{remark}
%\label{rem:simply}
%When $k=2$, we simply have the identity $\tilde{\mathcal W}_{2,1}=\tilde{\mathcal I}_{2,1}$.
%\end{remark}

\begin{lemma}
\label{lem:2}
Let $k>2$ and $h=1,\dots,m-1$ then we have:
\begin{equation}
  \label{e:ast-to-worse2}
  \left\{
    \begin{aligned}
{\tilde{\mathcal I}_{k,h+1}^\ast}(u)\equiv & 2\, {{\mathcal I}_{k,h}}(u)-2\, {{\mathcal I}_{k,h+1}}(u),\\
{\tilde{\mathcal K}_{k,h+1}^\ast}(u)\equiv & 2\, \mathcal{K}_{k,h}(u), \\
{\tilde{\mathcal V}_{k,h+1}^\ast}(u)\equiv  & 2 \mathcal V_{k,h}, \\
      {\tilde{\mathcal W}_{k,h+1}^\ast}(u)\equiv & 2\,\mathcal{V}_{k,h}(u)-2\,\mathcal{W}_{k,h}(u)-2\, \mathcal{V}_{k,h+1}(u)\,.
    \end{aligned}
    \right.
\end{equation}
\end{lemma}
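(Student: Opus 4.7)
The plan is to mirror the argument already set up in Lemma~\ref{lem:1}. For each $\mathcal X \in \{\mathcal I, \mathcal K, \mathcal V, \mathcal W\}$ I apply the star operation to $\tilde{\mathcal X}_{k,h+1}$ by differentiating in time and substituting $\partial_t u = i\partial_x^2 u$, $\partial_t \bar u = -i\partial_x^2 \bar u$. Since $\tilde{\mathcal X}_{k,h+1}$ contains exactly three derivative-carrying factors, the chain rule produces three leading terms; any derivative that lands on the smooth factors $u^{p-2}, \bar u^{p+1}, u^{p-1}, \bar u^{p}$ creates at least four nontrivial derivative factors and therefore belongs to $\Omega_k$. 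Modulo $\equiv$, I only need to track these three leading terms, each of which has total derivative count $2k$ distributed among exactly three factors.

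For each such term, the one where $\partial_x^2$ falls on the lowest-derivative factor $\partial_x^{2h}(\cdot)$ or $\partial_x^{2h+1}(\cdot)$ already matches one of the basic densities $\pm\mathcal X_{k,h+1}$ after accounting for the sign from $\pm i$. The two remaining terms, where $\partial_x^2$ lands on $\partial_x^{k-h-1}u$ or $\partial_x^{k-h-1}\bar u$, require integration by parts: one IBP converts the resulting $\partial_x^{k-h+1}(\cdot)$ back to $\partial_x^{k-h}(\cdot)$ and transfers one derivative to another factor; the leading contribution then matches a basic density $\mathcal X_{k,h}$, while the subleading one can be symmetrized via $\partial_x^{k-h-1}u\cdot\partial_x^{k-h}u = \tfrac12\partial_x[(\partial_x^{k-h-1}u)^2]$ and IBPed a second time to produce an extra $\pm\mathcal X_{k,h+1}$. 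Every other piece created by the various IBPs contains four nontrivial derivatives and drops into $\Omega_k$.

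For the mixed-type densities $\tilde{\mathcal V}_{k,h+1}$ and $\tilde{\mathcal W}_{k,h+1}$, whose highest-derivative structure involves both $u$ and $\bar u$, two extra ingredients are needed. First, the conjugation identity $\mathrm{Im}\int f = -\mathrm{Im}\int \bar f$ converts terms of the form $\mathrm{Im}\int \partial_x^{k-h-1}u\cdot\partial_x^{k-h}\bar u\cdot\partial_x^{2h+1}u\cdot\bar u^p u^{p-1}$, which do not literally match any basic density, into $-\mathcal W_{k,h}(u)$. Second, a clean cancellation between the two top-derivative star contributions (those from differentiating $\partial_x^{k-h-1}u$ and $\partial_x^{k-h-1}\bar u$) kills the symmetric piece $\mathrm{Im}\int |\partial_x^{k-h}u|^2\,\partial_x^{2h}u\cdot\bar u\,|u|^{2(p-1)}$ in the $\tilde{\mathcal V}$ computation, leaving the coefficient $2$ in front of $\mathcal V_{k,h}$ exactly as announced. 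The $\tilde{\mathcal W}$ calculation is the longest and produces three surviving densities $\mathcal V_{k,h}, \mathcal W_{k,h}, \mathcal V_{k,h+1}$ with signs $+2,-2,-2$.

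The main obstacle is purely combinatorial: keeping track of the signs coming from the $\pm i$ substitutions, from the conjugation identity, and from every IBP, and verifying that several subleading terms cancel in pairs. The hypothesis $1 \leq h \leq m-1$ guarantees $k-h-2 \geq 0$ (so that $\tilde{\mathcal W}_{k,h+1}$ is well-defined) and that the output indices $h, h+1$ both sit in the valid range of the basic densities. The endpoint $h=0$ is precisely Lemma~\ref{lem:1}; the endpoint $h=m$ is excluded because, when $k=3m$, the symmetrisation step creates the genuinely new density $\mathrm{Im}\int(\partial_x^{2m}u)^3 u^{p-2}\bar u^{p+1}$, which cannot be absorbed into the basic densities and appears as the residual term in \eqref{e:estimate-with-remains}.
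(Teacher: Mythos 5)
Your proposal follows essentially the same route as the paper: apply the star substitution, discard into $\Omega_k$ every term where a derivative lands on an underived factor, and reduce the three leading terms by integration by parts together with the symmetrisation $2\,\partial_x^{k-h}u\,\partial_x^{k-h-1}u=\partial_x\big[(\partial_x^{k-h-1}u)^2\big]$ and the $\mathrm{Im}/\mathrm{Re}$ conjugation identities for the mixed densities, which is exactly how Lemmas~\ref{lem:1}--\ref{lem:2} are proved in the text. The only slight imprecision is your closing remark on the excluded endpoint $h=m$: for $k=3m$ the obstruction is not that a new density is created but that the two contributions coincide and cancel, giving ${\tilde{\mathcal I}_{k,m+1}^\ast}(u)\equiv 0$ (Lemma~\ref{lem:3}), so that $\mathcal I_{k,m}$ can no longer be eliminated; this does not affect the proof of the stated range $h=1,\dots,m-1$.
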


\begin{proof}
We begin with the identity for ${\tilde {\mathcal I}_{k,h+1}^\ast}(u)$, then we compute
\begin{multline}\label{eq:utildop}
  {\tilde{\mathcal I}_{k,h+1}^\ast}(u)
  =
\left\{ \mathrm{Re}\int \big(\partial_x^{k-h-1} u\big)^2\, \partial_x^{2h}u\, u^{p-2}\, \bar u^{p+1}\right\}^\ast\\\equiv -2\, \mathrm{Im} \int \partial_x^{k-h+1}u\, \partial_x^{k-h-1}u\, \partial_x^{2h} u
\, u^{p-2}\, \bar u^{p+1}
\\- \mathrm{Im}\, \int \big(\partial_x^{k-h-1}u \big)^2\, \partial_x^{2h+2}u \, u^{p-2}\, \bar u^{p+1}
\end{multline} 
and by integration by parts we get
\begin{multline*}
{\tilde{\mathcal I}_{k,h+1}^\ast}(u) \equiv 2\, \mathrm{Im}\int \big(\partial_x^{k-h} u\big)^2\, \partial_x^{2h}u 
\, u^{p-2}\, \bar u^{p+1}+ 2\, \mathrm{Im}\int \partial_x^{k-h}u\, \partial_x^{k-h-1}u\, \partial_x^{2h+1}u
\, u^{p-2}\, \bar u^{p+1}
\\
-\mathrm{Im}\int \big(\partial_x^{k-h-1}u\big)^2\, \partial_x^{2h+2}u 
\, u^{p-2}\, \bar u^{p+1}\\= 2\, \mathrm{Im}\int \big(\partial_x^{k-h} u\big)^2\, \partial_x^{2h}u 
\, u^{p-2}\, \bar u^{p+1}- 2\, \mathrm{Im}\int (\partial_x^{k-h-1}u)^2\, \partial_x^{2h+2}u
\, u^{p-2}\, \bar u^{p+1},
\end{multline*}
hence the first equivalence in $\eqref{e:ast-to-worse2}$ follows.
\smallskip
Similarly, we have
\begin{multline*}
  {\tilde{\mathcal K}_{k,h+1}^\ast}(u)
  =
\left\{\mathrm{Re}\int \big(\partial_x^{k-h-1} u\big)^2\, \partial_x^{2h}\bar u \, u^{p-1}\, \bar u^p\right\}^\ast\\\equiv 
- 2\, \mathrm{Im} \int \partial_x^{k-h+1}u\, \partial_x^{k-h-1}u\, \partial_x^{2h} \bar u
\, u^{p-1}\, \bar u^p
+ \mathrm{Im}\, \int \big(\partial_x^{k-h-1}u \big)^2\, \partial_x^{2h+2}\bar u \, u^{p-1}\, \bar u^p,
\end{multline*}
from which after integration by parts 
\begin{multline*}
{\tilde{\mathcal K}_{k,h+1}^\ast}(u) \equiv 2\, \mathrm{Im}\int \big(\partial_x^{k-h} u\big)^2\, \partial_x^{2h}\bar u 
\, u^{p-1}\, \bar u^p
+2\, \mathrm{Im}\int \partial_x^{k-h}u\, \partial_x^{k-h-1}u\, \partial_x^{2h+1}\bar u
\, u^{p-1}\, \bar u^p
\\
+\mathrm{Im}\int \big(\partial_x^{k-h-1}u\big)^2\, \partial_x^{2h+2}\bar u \, u^{p-1}\, \bar u^p\\
\equiv 2\, \mathrm{Im}\int \big(\partial_x^{k-h} u\big)^2\, \partial_x^{2h}\bar u 
\, u^{p-1}\, \bar u^p
-\mathrm{Im}\int \big(\partial_x^{k-h-1}u\big)^2\, \partial_x^{2h+2}\bar u \, u^{p-1}\, \bar u^p
\\+\mathrm{Im}\int \big(\partial_x^{k-h-1}u\big)^2\, \partial_x^{2h+2}\bar u \, u^{p-1}\, \bar u^p,
\end{multline*}
that is the second equivalence in \eqref{e:ast-to-worse2}.
\smallskip 
About the third equivalence in \eqref{e:ast-to-worse2}, we have
\begin{multline*}
  {\tilde{\mathcal V}_{k,h+1}^\ast}(u)
  =
\left\{ \mathrm{Re} \int |\partial_x^{k-h-1}u|^2\, \partial_x^{2h}u \, \bar u \, |u|^{2(p-1)}\right\}^\ast\\
\equiv -2\, \mathrm{Im} \int \partial_x^{k-h+1}u\, \partial_x^{k-h-1}\bar u\, \mathrm{Re} (\partial_x^{2h}u \, \bar u) \, |u|^{2(p-1)}
\\-\mathrm{Im} \int |\partial_x^{k-h-1}u|^2\,\partial_x^{2h+2}u \, \bar u \, |u|^{2(p-1)}\\
\equiv 
%2\, \mathrm{Im} \int \partial_x^{k-h}u\, \partial_x^{k-h}\bar u\, \mathrm{Re}(\partial_x^{2h}u \, \bar u) \, |u|^{2(p-1)}+
2\, \mathrm{Im} \int \partial_x^{k-h}u\, \partial_x^{k-h-1}\bar u\,\mathrm{Re} (\partial_x^{2h+1}u \, \bar u) \, |u|^{2(p-1)}\\
-\mathrm{Im} \int |\partial_x^{k-h-1}u|^2\,\partial_x^{2h+2}u \, \bar u \, |u|^{2(p-1)}
\end{multline*}
and we can continue
\begin{multline*}
{\tilde{\mathcal V}_{k,h+1}^\ast}(u) \equiv 
%-2\, \mathrm{Im} \int \partial_x^{k-h}u\, \partial_x^{k-h+1}\bar u\,\partial_x^{2h-1}u \, \bar u \, |u|^{2(p-1)}
%-2\, \mathrm{Im} \int \partial_x^{k-h+1}u\, \partial_x^{k-h}\bar u\,\partial_x^{2h-1}u \, \bar u \, |u|^{2(p-1)}
%\\
\mathrm{Im} \int \partial_x^{k-h}u\, \partial_x^{k-h-1}\bar u\,\partial_x^{2h+1}u \, \bar u \, |u|^{2(p-1)}
\\+ \mathrm{Im} \int \partial_x^{k-h}u\, \partial_x^{k-h-1}\bar u\,\partial_x^{2h+1}\bar u \,u \, |u|^{2(p-1)}
\\
+\mathrm{Im} \int \partial_x^{k-h}u \partial_x^{k-h-1}\bar u \,\partial_x^{2h+1}u \, \bar u \, |u|^{2(p-1)}
\\+\mathrm{Im} \int \partial_x^{k-h-1}u \partial_x^{k-h}\bar u \,\partial_x^{2h+1}u \, \bar u \, |u|^{2(p-1)}
\end{multline*}
and the third equivalence follows.
Finally we prove the fourth equivalence in \eqref{e:ast-to-worse2}. We have
\begin{multline*}
  {\tilde{\mathcal W}_{k,h+1}^\ast}(u)
  = \Big\{ \mathrm{Re}\int \partial_x^{k-h-1} u\,\partial_x^{k-h-2} \bar u\, \partial_x^{2h+1}u \, u^{p-1}\, \bar u^p\Big\}^\ast 
\\
\equiv - \mathrm{Im}\int\partial_x^{k-h+1}u\,\partial_x^{k-h-2} \bar u\, \partial_x^{2h+1}u 
\, u^{p-1}\, \bar u^p \\{}+ \mathrm{Im}\int\partial_x^{k-h-1}u\,\partial_x^{k-h} \bar u\, \partial_x^{2h+1}u
\, u^{p-1}\, \bar u^p\\{}- \mathrm{Im}\int \partial_x^{k-h-1} u\,\partial_x^{k-h-2} \bar u\, \partial_x^{2h+3}u 
\, u^{p-1}\, \bar u^p.
\end{multline*}
Hence by integration by parts we obtain
\begin{multline*}
{\tilde{\mathcal W}_{k,h+1}^\ast}(u) \equiv \mathrm{Im}\int\partial_x^{k-h}u\,\partial_x^{k-h-1} \bar u\, \partial_x^{2h+1}u 
\, u^{p-1}\, \bar u^p\\+\mathrm{Im}\int\partial_x^{k-h}u\,\partial_x^{k-h-2} \bar u\, \partial_x^{2h+2}u 
\, u^{p-1}\, \bar u^p\\
- \mathrm{Im}\int\partial_x^{k-h}u\,\partial_x^{k-h-1} \bar u\, \partial_x^{2h+1}\bar u 
\, \bar u^{p-1}\, u^p- \mathrm{Im}\,\int \partial_x^{k-h-1} u\,\partial_x^{k-h-2} \bar u\, \partial_x^{2h+3}u
\, u^{p-1}\, \bar u^p\\
\equiv \mathrm{Im}\int\partial_x^{k-h}u\,\partial_x^{k-h-1} \bar u\, \partial_x^{2h+1}u 
\, u^{p-1}\, \bar u^p\\
-\mathrm{Im}\int\partial_x^{k-h-1}u\,\partial_x^{k-h-1} \bar u\, \partial_x^{2h+2}u 
\, u^{p-1}\, \bar u^p
-\mathrm{Im}\int\partial_x^{k-h-1}u\,\partial_x^{k-h-2} \bar u\, \partial_x^{2h+3}u 
\, u^{p-1}\, \bar u^p\\
- \mathrm{Im}\int\partial_x^{k-h}u\,\partial_x^{k-h-1} \bar u\, \partial_x^{2h+1}\bar u 
\, \bar u^{p-1}\, u^p- \mathrm{Im}\,\int \partial_x^{k-h-1} u\,\partial_x^{k-h-2} \bar u\, \partial_x^{2h+3}u
\, u^{p-1}\, \bar u^p
\\
\equiv \mathrm{Im}\int\partial_x^{k-h}u\,\partial_x^{k-h-1} \bar u\, \partial_x^{2h+1}u 
\, u^{p-1}\, \bar u^p\\
+\mathrm{Im}\int\partial_x^{k-h}u\,\partial_x^{k-h-1} \bar u\, \partial_x^{2h+1}u 
\, u^{p-1}\, \bar u^p
+\mathrm{Im}\int\partial_x^{k-h-1}u\,\partial_x^{k-h} \bar u\, \partial_x^{2h+1}u 
\, u^{p-1}\, \bar u^p
\\-\mathrm{Im}\int\partial_x^{k-h-1}u\,\partial_x^{k-h-2} \bar u\, \partial_x^{2h+3}u 
\, u^{p-1}\, \bar u^p\\
- \mathrm{Im}\int\partial_x^{k-h}u\,\partial_x^{k-h-1} \bar u\, \partial_x^{2h+1}\bar u 
\, \bar u^{p-1}\, u^p- \mathrm{Im}\,\int \partial_x^{k-h-1} u\,\partial_x^{k-h-2} \bar u\, \partial_x^{2h+3}u
\, u^{p-1}\, \bar u^p
\end{multline*}
and we are done.\end{proof}
\begin{lemma}
\label{lem:3}
Let $k>2$ then we have:
\begin{equation}
\label{e:ast-to-worse22}
{\tilde{\mathcal K}_{k,m+1}^\ast}(u)\equiv 2\, \mathcal{K}_{k,m}(u),\quad 
{\tilde{\mathcal V}_{k,m+1}^\ast}(u)\equiv  2 \mathcal V_{k,m}.
\end{equation}
Moreover we have:
\begin{itemize}
\item for $k=3m$ then 
\begin{equation}\label{3m}
{\tilde{\mathcal I}_{k,m+1}^\ast}(u)\equiv 0,
\end{equation} 
and if $m>1$,
\begin{equation}\label{3m.2}
{\tilde{\mathcal W}_{k,m+1}^\ast}(u)\equiv  2{\mathcal V}_{k, m}
- 2{\mathcal W}_{k, m} - 2{\mathcal W}_{k, m-1}  - 2{\mathcal K}_{k, m-1}
+4{\mathcal K}_{k, m};\end{equation}
\item for $k=3m+1$ then
\begin{equation}\label{3m+1}
{\tilde{\mathcal I}_{k,m+1}^\ast}(u)\equiv 6 {\mathcal I}_{k,m};
%\quad 
%{\tilde{\mathcal W}_{k,m+1}^\ast}(u)\equiv -{\mathcal V}_{k,m}.
\end{equation}
\item for $k=3m+2$ then
\begin{equation}\label{3m+2}
{\tilde{\mathcal I}_{k,m+1}^\ast}(u)\equiv 2 {\mathcal I}_{k,m}, \quad 
{\tilde{\mathcal W}_{k,m+1}^\ast}(u)\equiv 2 {\mathcal V}_{k,m} - {\mathcal W}_{k,m}
+ 2 {\mathcal K}_{k,m}.
\end{equation}
\end{itemize}

\end{lemma}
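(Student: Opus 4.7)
The plan is to mirror, case by case, the integration-by-parts argument used in the proof of Lemma \ref{lem:2}, paying extra attention to what happens when the two derivatives that are transferred during IBP happen to collide. This collision occurs precisely at the boundary $h=m$ and depends on the residue of $k$ modulo $3$, which is exactly why the statement branches into three cases.

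The identities in \eqref{e:ast-to-worse22} for $\tilde{\mathcal K}^*$ and $\tilde{\mathcal V}^*$ come first, and essentially for free: they are obtained by literally repeating the computations that produced the second and third equivalences of \eqref{e:ast-to-worse2}. Indeed, in both of those computations only one density ($\mathcal K_{k,h}$ or $\mathcal V_{k,h}$) appeared on the right-hand side, with no further redistribution of derivatives needed, so the case $h=m$ requires no new input.

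Next, for $\tilde{\mathcal I}_{k,m+1}^*$, I would start from \eqref{eq:utildop} specialized to $h=m$, which reads
\[
\tilde{\mathcal I}_{k,m+1}^*(u)\equiv -2\,\mathrm{Im}\!\int \partial_x^{k-m+1}u\,\partial_x^{k-m-1}u\,\partial_x^{2m}u\, u^{p-2}\bar u^{p+1} - \mathrm{Im}\!\int (\partial_x^{k-m-1}u)^2\,\partial_x^{2m+2}u\,u^{p-2}\bar u^{p+1},
\]
and then integrate by parts once more to bring the highest-order derivative down. I would then split according to the value of $k\bmod 3$. When $k=3m$, both resulting terms share the structure $\partial_x^{2m-1}u\,\partial_x^{2m}u\,\partial_x^{2m+1}u$ but come with opposite signs and thus cancel, proving \eqref{3m}. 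When $k=3m+1$, the two terms have the \emph{same} structure $(\partial_x^{2m}u)^2\partial_x^{2m+2}u$, so they add rather than cancel, and a single IBP converts the sum to $6\,\mathcal I_{k,m}(u)$, proving \eqref{3m+1}. When $k=3m+2$, the second term is of the form $(\partial_x^{2m+1}u)^2\partial_x^{2m+2}u=\tfrac{1}{3}\partial_x\!\left[(\partial_x^{2m+1}u)^3\right]$, a total derivative which, after IBP against $u^{p-2}\bar u^{p+1}$, falls into $\Omega_k$; the first term produces $2\,\mathcal I_{k,m}(u)$ modulo $\Omega_k$, proving \eqref{3m+2}.

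Finally, I would handle $\tilde{\mathcal W}_{k,m+1}^*$ by a similar but considerably longer bookkeeping: carry out the same chain of IBP steps used to prove the fourth equivalence of \eqref{e:ast-to-worse2}, branching on the value of $k\bmod 3$ only at the very last stage. The subcase $k=3m+2$ yields the combination $2\mathcal V_{k,m}-\mathcal W_{k,m}+2\mathcal K_{k,m}$ once the coincidences of middle-order derivatives are taken into account. The subcase $k=3m$ is the main obstacle: the collision of derivative indices forces additional IBP steps, some of which transfer derivatives onto lower-order factors, producing terms of type $\mathcal W_{k,m-1}$ and $\mathcal K_{k,m-1}$, which explains the hypothesis $m>1$ in \eqref{3m.2}. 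The sharp coefficients $(2,-2,-2,-2,4)$ then follow from a careful linear bookkeeping of these cancellations, together with the identity $\mathrm{Im}\,z\,\mathrm{Re}\,w+\mathrm{Re}\,z\,\mathrm{Im}\,w=\mathrm{Im}(zw)$ already exploited in Lemma \ref{lem:1}, used here to recombine real and imaginary parts of the middle factors.
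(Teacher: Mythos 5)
Your proposal follows essentially the same route as the paper: specialize the Lemma \ref{lem:2} computations (in particular \eqref{eq:utildop} and the integration-by-parts chain for $\tilde{\mathcal W}^\ast$) to $h=m$ and branch on $k\bmod 3$, and your treatment of the three $\tilde{\mathcal I}_{k,m+1}^\ast$ cases --- cancellation for $k=3m$, reinforcement to $6\,\mathcal I_{k,m}$ for $k=3m+1$, and the total-derivative observation $(\partial_x^{2m+1}u)^2\partial_x^{2m+2}u=\tfrac13\partial_x\big[(\partial_x^{2m+1}u)^3\big]$ landing in $\Omega_k$ for $k=3m+2$ --- matches the paper's argument. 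The only shortfall is that the coefficients in \eqref{3m.2} and \eqref{3m+2} for $\tilde{\mathcal W}_{k,m+1}^\ast$ are asserted rather than derived; the paper's proof of this part consists precisely of carrying out that bookkeeping starting from \eqref{ritfra}, so you should execute it to confirm $(2,-2,-2,-2,4)$.
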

\begin{proof}
The proof of \eqref{e:ast-to-worse22} is identical to the one of the second and third equivalence in \eqref{e:ast-to-worse2} in the case $h=m$.
Next we focus on the remaining estimates.
First notice that by \eqref{eq:utildop} we have 
\begin{multline}\label{partfra}{\tilde{\mathcal I}_{k,m+1}^\ast}(u)
\equiv  2\, \mathrm{Im}\int \big(\partial_x^{k-m} u\big)^2\, \partial_x^{2m}u 
\, u^{p-2}\, \bar u^{p+1}\\- 2\, \mathrm{Im}\int (\partial_x^{k-m-1}u)^2\, \partial_x^{2m+2}u
\, u^{p-2}\, \bar u^{p+1}.
\end{multline}
Also by the last equivalence in \eqref{e:ast-to-worse2}, which is true also for $h=m$, we get
\begin{multline}\label{ritfra}
{\tilde{\mathcal W}_{k,m+1}^\ast}(u)\equiv
2 \mathrm{Im}\int\partial_x^{k-m}u\,\partial_x^{k-m-1} \bar u\, \partial_x^{2m+1}u 
\, u^{p-1}\, \bar u^p\\
-2\mathrm{Im}\int\partial_x^{k-m}u\,\partial_x^{k-m-1} \bar u\, \partial_x^{2m+1}\bar u 
\, \bar u^{p-1}\, u^p
\\-2\mathrm{Im}\int\partial_x^{k-m-1}u\,\partial_x^{k-m-2} \bar u\, \partial_x^{2m+3}u 
\, u^{p-1}\, \bar u^p.
\end{multline}
\\
{\bf (Case $k=3m$)}
In this specific case we get from \eqref{partfra}
\begin{multline*}
{\tilde{\mathcal I}_{k,m+1}^\ast}(u)
\equiv  2\, \mathrm{Im}\int \big(\partial_x^{2m} u\big)^3 
\, u^{p-2}\, \bar u^{p+1}- 2\, \mathrm{Im}\int (\partial_x^{2m-1}u)^2\, \partial_x^{2m+2}u
\, u^{p-2}\, \bar u^{p+1}\\
\equiv 2\, \mathrm{Im}\int \big(\partial_x^{2m} u\big)^3 
\, u^{p-2}\, \bar u^{p+1}+4\, \mathrm{Im}\int \partial_x^{2m-1}u \,  \partial_x^{2m}u \,
\partial_x^{2m+1}u\, u^{p-2}\, \bar u^{p+1}\\
\equiv 2\, \mathrm{Im}\int \big(\partial_x^{2m} u\big)^3 
\, u^{p-2}\, \bar u^{p+1}-2\, \mathrm{Im}\int (\partial_x^{2m}u)^3 \,u^{p-2}\, \bar u^{p+1}
\end{multline*}
and we conclude the equivalence in \eqref{3m}. Next notice that,  for $k=3m$ with $m>1$, we get from \eqref{ritfra}
\begin{multline}\label{prelW}
{\tilde{\mathcal W}_{k,m+1}^\ast}(u)
\equiv 
2 \mathrm{Im}\int\partial_x^{2m}u\,\partial_x^{2m-1} \bar u\, \partial_x^{2m+1}u 
\, u^{p-1}\, \bar u^p
\\
-2\mathrm{Im}\int\partial_x^{2m}u\,\partial_x^{2m-1} \bar u\, \partial_x^{2m+1}\bar u 
\, \bar u^{p-1}\, u^p
-2\mathrm{Im}\int\partial_x^{2m-1}u\,\partial_x^{2m-2} \bar u\, \partial_x^{2m+3}u 
\, u^{p-1}\, \bar u^p
\end{multline}
and we compute the last term on the r.h.s.
\begin{multline*}\mathrm{Im}\int\partial_x^{2m-1}u\,\partial_x^{2m-2} \bar u\, \partial_x^{2m+3}u 
\, u^{p-1}\, \bar u^p\equiv 
-\mathrm{Im}\int\partial_x^{2m}u\,\partial_x^{2m-2} \bar u\, \partial_x^{2m+2}u 
\, u^{p-1}\, \bar u^p
\\-\mathrm{Im}\int\partial_x^{2m-1}u\,\partial_x^{2m-1} \bar u\, \partial_x^{2m+2}u 
\, u^{p-1}\, \bar u^p\\
\equiv \mathrm{Im}\int\partial_x^{2m+1}u\,\partial_x^{2m-2} \bar u\, \partial_x^{2m+1}u 
\, u^{p-1}\, \bar u^p
+\mathrm{Im}\int\partial_x^{2m}u\,\partial_x^{2m-1} \bar u\, \partial_x^{2m+1}u 
\, u^{p-1}\, \bar u^p
\\+\mathrm{Im}\int\partial_x^{2m}u\,\partial_x^{2m-1} \bar u\, \partial_x^{2m+1}u 
\, u^{p-1}\, \bar u^p\
+\mathrm{Im}\int\partial_x^{2m-1}u\,\partial_x^{2m} \bar u\, \partial_x^{2m+1}u 
\, u^{p-1}\, \bar u^p\\
\equiv \mathrm{Im}\int\partial_x^{2m+1}u\,\partial_x^{2m-2} \bar u\, \partial_x^{2m+1}u 
\, u^{p-1}\, \bar u^p
+\mathrm{Im}\int\partial_x^{2m}u\,\partial_x^{2m-1} \bar u\, \partial_x^{2m+1}u 
\, u^{p-1}\, \bar u^p
\\+\mathrm{Im}\int\partial_x^{2m}u\,\partial_x^{2m-1} \bar u\, \partial_x^{2m+1}u 
\, u^{p-1}\, \bar u^p\
\\-\mathrm{Im}\int\partial_x^{2m}u\,\partial_x^{2m} \bar u\, \partial_x^{2m}u 
\, u^{p-1}\, \bar u^p
-\mathrm{Im}\int\partial_x^{2m-1}u\,\partial_x^{2m+1} \bar u\, \partial_x^{2m}u 
\, u^{p-1}\, \bar u^p
\\= {\mathcal K}_{k,m-1}-2{\mathcal K}_{k,m} +{\mathcal W}_{k,m-1}.\end{multline*}
The equivalence in \eqref{3m.2} then follows from  \eqref{prelW} and the equivalence above.
%and $2\, \mathcal V_{k,m}(u)=-\mathcal K_{k,m}(u)$.
\\
\\
\noindent {\bf (Case $k=3m+1$)}
In this case we get from \eqref{partfra} the equivalence
\begin{multline*}{\tilde{\mathcal I}_{k,m+1}^\ast}(u)
\equiv  2\, \mathrm{Im}\int \big(\partial_x^{2m+1} u\big)^2\, \partial_x^{2m}u 
\, u^{p-2}\, \bar u^{p+1}\\- 2\, \mathrm{Im}\int (\partial_x^{2m}u)^2\, \partial_x^{2m+2}u
\, u^{p-2}\, \bar u^{p+1}\\
\equiv 
 2\, \mathrm{Im}\int \big(\partial_x^{2m+1} u\big)^2\, \partial_x^{2m}u 
\, u^{p-2}\, \bar u^{p+1}+4\, \mathrm{Im}\int \partial_x^{2m}u\, \partial_x^{2m+1}u\, 
\partial_x^{2m+1}u
\, u^{p-2}\, \bar u^{p+1}
\end{multline*}
and hence we conclude \eqref{3m+1}. 
%On the other hand
%\begin{multline*}
%{\tilde{\mathcal W}_{k,m+1}^\ast}(u)\equiv
%2 \mathrm{Im}\int\partial_x^{2m+1}u\,\partial_x^{2m} \bar u\, \partial_x^{2m+1}u 
%\, u^{p-1}\, \bar u^p\\
%-2\mathrm{Im}\int\partial_x^{2m+1}u\,\partial_x^{2m} \bar u\, \partial_x^{2m+1}\bar u 
%\, \bar u^{p-1}\, u^p
%-2\mathrm{Im}\int\partial_x^{2m}u\,\partial_x^{2m-1} \bar u\, \partial_x^{2m+3}u 
%\, u^{p-1}\, \bar u^p
%\\
%\equiv
%2 \mathrm{Im}\int\partial_x^{2m+1}u\,\partial_x^{2m} \bar u\, \partial_x^{2m+1}u 
%\, u^{p-1}\, \bar u^p
%-2\mathrm{Im}\int\partial_x^{2m+1}u\,\partial_x^{2m} \bar u\, \partial_x^{2m+1}\bar u 
%\, \bar u^{p-1}\, u^p\\+
%2\mathrm{Im}\int\partial_x^{2m+1}u\,\partial_x^{2m-1} \bar u\, \partial_x^{2m+2}u 
%\, u^{p-1}\, \bar u^p
%+2\mathrm{Im}\int\partial_x^{2m}u\,\partial_x^{2m} \bar u\, \partial_x^{2m+2}u 
%\, u^{p-1}\, \bar u^p
%\\
%\equiv
%2 \mathrm{Im}\int\partial_x^{2m+1}u\,\partial_x^{2m} \bar u\, \partial_x^{2m+1}u 
%\, u^{p-1}\, \bar u^p
%-2\mathrm{Im}\int\partial_x^{2m+1}u\,\partial_x^{2m} \bar u\, \partial_x^{2m+1}\bar u 
%\, \bar u^{p-1}\, u^p\\+
%2\mathrm{Im}\int\partial_x^{2m+1}u\,\partial_x^{2m-1} \bar u\, \partial_x^{2m+2}u 
%\, u^{p-1}\, \bar u^p
%-2\mathrm{Im}\int\partial_x^{2m}u\,\partial_x^{2m+1} \bar u\, \partial_x^{2m+1}u 
%\, u^{p-1}\, \bar u^p\\-2\mathrm{Im}\int\partial_x^{2m+1}u\,\partial_x^{2m} \bar u\, \partial_x^{2m+1}u 
%\, u^{p-1}\, \bar u^p
%\end{multline*}
%and we conclude also in this case.
\\
\\
{\bf (Case $k=3m+2$)}
First we notice that from \eqref{partfra}
\begin{multline*}{\tilde{\mathcal I}_{k,m+1}^\ast}(u)
\equiv  2\, \mathrm{Im}\int \big(\partial_x^{2m+2} u\big)^2\, \partial_x^{2m}u 
\, u^{p-2}\, \bar u^{p+1} \\- 2\, \mathrm{Im}\int (\partial_x^{2m+1}u)^2\, \partial_x^{2m+2}u
\, u^{p-2}\, \bar u^{p+1}\equiv 2\, \mathrm{Im}\int \big(\partial_x^{2m+2} u\big)^2\, \partial_x^{2m}u 
\, u^{p-2}\, \bar u^{p+1}
\end{multline*}
and we conclude the first equivalence in \eqref{3m+2}.
Next we compute by \eqref{ritfra}
\begin{multline*}
{\tilde{\mathcal W}_{k,m+1}^\ast}(u)\equiv
2 \mathrm{Im}\int\partial_x^{2m+2}u\,\partial_x^{2m+1} \bar u\, \partial_x^{2m+1}u 
\, u^{p-1}\, \bar u^p\\
-2\mathrm{Im}\int\partial_x^{2m+2}u\,\partial_x^{2m+1} \bar u\, \partial_x^{2m+1}\bar u 
\, \bar u^{p-1}\, u^p
-2\mathrm{Im}\int\partial_x^{2m+1}u\,\partial_x^{2m} \bar u\, \partial_x^{2m+3}u 
\, u^{p-1}\, \bar u^p
\\ \equiv
2 \mathrm{Im}\int\partial_x^{2m+2}u\,\partial_x^{2m+1} \bar u\, \partial_x^{2m+1}u 
\, u^{p-1}\, \bar u^p\\
-2\mathrm{Im}\int\partial_x^{2m+2}u\,\partial_x^{2m+1} \bar u\, \partial_x^{2m+1}\bar u 
\, \bar u^{p-1}\, u^p\\
+2\mathrm{Im}\int\partial_x^{2m+1}u\,\partial_x^{2m+1} \bar u\, \partial_x^{2m+2}u 
\, u^{p-1}\, \bar u^p
+2\mathrm{Im}\int\partial_x^{2m+2}u\,\partial_x^{2m} \bar u\, \partial_x^{2m+2}u 
\, u^{p-1}\, \bar u^p\\
\equiv 2 \mathrm{Im}\int\partial_x^{2m+2}u\,\partial_x^{2m+1} \bar u\, \partial_x^{2m+1}u 
\, u^{p-1}\, \bar u^p\\
-2\mathrm{Im}\int\partial_x^{2m+2}u\,\partial_x^{2m+1} \bar u\, \partial_x^{2m+1}\bar u 
\, \bar u^{p-1}\, u^p\\
-\mathrm{Im}\int (\partial_x^{2m+1}u)^2\,\partial_x^{2m+2} \bar u
\, u^{p-1}\, \bar u^p
+2\mathrm{Im}\int\partial_x^{2m+2}u\,\partial_x^{2m} \bar u\, \partial_x^{2m+2}u 
\, u^{p-1}\, \bar u^p\
\end{multline*}
and we conclude the second equivalence in \eqref{3m+2}.
\end{proof}
\noindent {\em Proof of Proposition \ref{prop:structure}.}
First notice that for any integer $k\geq 2$ and $h\in \{0,\dots, m\}$
\begin{equation}\label{starstar}\tilde {\mathcal I}_{k, h+1}^{\ast\ast}(u), 
\tilde {\mathcal J}_{k,h+1}^{\ast\ast}(u), 
\tilde {\mathcal V}_{k,h+1}^{\ast\ast}(u), 
\tilde {\mathcal W}_{k,h+1}^{\ast\ast}(u)\equiv 0.\end{equation}
Hence, by Lemma \ref{Hk}, we conclude by using \eqref{e:asts} once we  prove that there exist $\tilde \alpha_h, \tilde \beta_h, \tilde \gamma_h$, $h\in \{0,\dots, m\}$ and $\tilde \delta_h,\, h\in \{1,\dots, m\}$ such that
\begin{multline}\label{saT}\sum_{h=0}^{m} \left(\tilde{\alpha}_h\, \tilde{\mathcal I}_{k,h+1}^\ast(u)  + \, \tilde{\beta}_h\, \tilde{\mathcal V}_{k,h+1}^\ast (u)+\tilde{\gamma}_h\, \tilde{\mathcal W}_{k,h+1}^\ast(u)\right)
+\sum_{h=1}^{m} \tilde{\delta}_h\, \tilde{\mathcal K}_{k,h+1}^\ast (u)\\
\equiv - \sum_{h=0}^{m}\Big(\alpha_h\, \mathcal I_{k,h} (u)+\, \beta_h\, \mathcal V_{k,h}(u)+ \gamma_h\, \mathcal W_{k,h}(u)\Big)
\\- \sum_{h=1}^m \delta_h\, \mathcal K_{k,h} (u) \hbox{ for } k=3m+1,\, 3m+2,\end{multline}
and
\begin{multline}\label{saT2}\sum_{h=0}^{m} \left(\tilde{\alpha}_h\, \tilde{\mathcal I}_{k,h+1}^\ast(u) +\tilde{\beta}_h\, \tilde{\mathcal V}_{k,h+1}^\ast(u)+\tilde{\gamma}_h\, \tilde{\mathcal W}_{k,h+1}^\ast (u)\right)+ \sum_{h=1}^m \tilde{\delta}_h\, \tilde{\mathcal K}_{k,h+1}^\ast (u) \\\equiv c \,
\mathrm{Im}\int \big(\partial_x^{2m} u\big)^3\, u^{p-2}\, \bar u^{p+1}   - \sum_{h=0}^{m}\Big(\alpha_h\, \mathcal I_{k,h}(u)+ \beta_h\, \mathcal V_{k,h}(u)+ \gamma_h\, \mathcal W_{k,h}(u)\Big)\\-\, \sum_{h=1}^m \delta_h\, \mathcal K_{k,h}(u)  \hbox{ for } k=3m,\end{multline}
where $\alpha_h, \beta_h, \gamma_h, \delta_h$
are the same constants in \eqref{e:der} and $c\in \R$ is a suitable constant. 

\smallskip
We preliminary treat the special and easier cases $k=2$ and $k=3$.\\
\\
{\bf First case: $k=2$.} 
We have to prove the existence of $\tilde \alpha_0, \tilde \beta_0, \tilde \gamma_0$ such that
\begin{multline}\label{conad}\tilde{\alpha}_0\, \tilde{\mathcal I}_{2,1}^\ast(u)  + \, \tilde{\beta}_0\, \tilde{\mathcal V}_{2,1}^\ast (u)+\tilde{\gamma}_0\, \tilde{\mathcal W}_{2,1}^\ast(u)\\
\equiv - \alpha_0\, \mathcal I_{2,0} (u)-\, \beta_0\, \mathcal V_{2,0}(u)- \gamma_0\, \mathcal W_{2,0}(u)\end{multline}
where $\alpha_0, \beta_0, \gamma_0$ are set, for $k=2$, in \eqref{e:der}.
Notice that by \eqref{e:corrections} and integration by parts we have the identities
$$\tilde{\mathcal I}_{2,1}=\tilde{\mathcal W}_{2,1}, \quad
\mathcal W_{2,0}(u) \equiv 2\,\mathcal V_{2,0}(u).
$$ Hence the condition \eqref{conad} can be written as follows:
\begin{equation*}(\tilde \alpha_0 + \tilde \gamma_0) {\tilde{\mathcal I}_{2,1}^\ast}(u)+ \, \tilde{\beta}_0\, \tilde{\mathcal V}_{2,1}^\ast (u)
\equiv - \alpha_0\, \mathcal I_{2,0} (u)-(\, \beta_0 + 2 \gamma_0)\, \mathcal V_{2,0}(u)
\end{equation*}
By Lemma \ref{lem:1} and by using  $ \mathcal I_{2,1}(u)\equiv 0$ (which in turn follows by \eqref{e:corrections} and integration by parts) we get
$${\tilde{\mathcal I}_{2,1}^\ast}(u)\equiv 2\, \mathcal I_{2,0}(u)-2(p-1)\, \mathcal I_{2,1}(u)
\equiv 2\, \mathcal I_{2,0}(u),\quad 
{\tilde{\mathcal V}_{2,1}^\ast}(u)\equiv  4p\, \mathcal V_{2,0}(u)$$
and hence the condition \eqref{conad} holds if we impose:
%and integrations by parts,$$\mathcal W_{2,0}(u) \equiv 2\,\mathcal V_{2,0}(u).$$
%$${\tilde{\mathcal W}_{2,1}^\ast}(u)\equiv 2 \, {\mathcal I}_{2,0}(u) +
%2(p+1)  \, {\mathcal V}_{2,0}(u)- (p+1)\, {\mathcal W}_{2,0}(u).$$
%By integration by parts we get $ \mathcal I_{2,1}(u)\equiv 0$ and hence
%we have to impose the following conditions 
$$
\begin{cases}
2 (\tilde \alpha_0 +  \tilde \gamma_0)=-\alpha_0\\
4p \tilde \beta_0 =-\beta_0 - 2 \gamma_0.
\end{cases}
$$
Hence we get a $2\times 2$ linear system on the unknowns $\tilde \alpha_0 +  \tilde \gamma_0, \tilde \beta_0$ whose
associated matrix is invertible
$$
 \begin{bmatrix} 2 & 0 \\ 0& 4p\end{bmatrix}.
$$
Notice that we can select uniquely $\tilde \alpha_0 +  \tilde \gamma_0$ and $\tilde \beta_0$, and hence we have infinitely many choices 
for  $\tilde \alpha_0$ and $\tilde \gamma_0$.
 %\begin{remark}
 %JE NE SAIS PAS CE QUE VEUT DIRE  NON-SINGULAR ICI. POURQUOI NE PAS FAIRE $\tilde \alpha_0=\tilde \gamma_0$ et alors tout est determine par les 2 equations.
%OU ALORS TU VEUX JUSTE DIRE QUE LE RANG DE LA MATRICE EST MAXIMAL ? (C'EST-A-DIRE 2)
%\end{remark}
%Hence, \eqref{saT} is satisfied.
\\
\noindent {\bf Second case: $k=3$.} In this case by \eqref{e:corrections} and integrating by parts we get
\begin{multline*}
{\tilde{\mathcal W}_{3,2}}(u)\equiv -\mathrm{Re}\int \big(\partial_x^2 u\big)^2\, u^{p-1}\, \bar u^{p+1} -  (p+1) \mathrm{Re} \int \partial_x^2 u\, \partial_x \bar u\, \partial_x u\, u^{p-1}\, \bar u^p
\\
 = - \tilde{\mathcal I}_{3,1}(u)-(p+1) \tilde{\mathcal W}_{3,1}(u).
\end{multline*}
On the other hand we have
\begin{multline*}
\mathcal W_{3,1}(u)=\mathrm{Im}\int \partial_x^2 u\, \partial_x \bar u\, \partial_x^3 \bar u\, u^p\, \bar u^{p-1} = -\mathrm{Im}\int \partial_x^2 \bar u\, \partial_x u\, \partial_x^3 u\, u^{p-1}\, \bar u^{p} = -\mathcal V_{3,0}(u).
\end{multline*}
From Lemmas \ref{lem:1}, \ref{lem:2}, and \ref{lem:3}, we also have
\begin{multline*}
{\tilde{\mathcal I}_{3,1}}^\ast(u)  \equiv 2\, \mathcal I_{3,0}(u)-2(p-1)\, \mathcal I_{3,1}(u),\, {\tilde{\mathcal I}_{3,2}}^\ast(u) \equiv 0, \, {\tilde{\mathcal V}_{3,1}}^\ast(u) \equiv 4\, p\, \mathcal V_{3,0}(u)\\ {\tilde{\mathcal V}_{3,2}}^\ast(u) \equiv 2\, \mathcal V_{3,1}(u), \, {\tilde{\mathcal W}_{3,1}}^\ast(u) \equiv 2\, \mathcal V_{3,0}(u)-2\, \mathcal W_{3,0}(u)-2\, \mathcal V_{3,1}(u), \, {\tilde{\mathcal K}_{3,2}}^\ast(u) \equiv 2\, \mathcal K_{3,1}(u).
\end{multline*}
Hence \eqref{saT2} follows if we can find 
$\tilde \alpha_0, \tilde \beta_0, \tilde \gamma_0,  \tilde \beta_1, \tilde \gamma_1, \tilde \delta_1,$ such that
\begin{multline}
\label{e:keq3}
(\tilde \alpha_0 - \tilde \gamma_1 )\, {\tilde{\mathcal I}_{3,1}}^\ast(u) + \tilde \beta_0  \, {\tilde{\mathcal V}_{3,1}}^\ast(u) + (\tilde \gamma_0 -(p+1)\, \tilde \gamma_1)\, {\tilde{\mathcal W}_{3,1}}^\ast(u) 
\\
+ \tilde \beta_1 \, {\tilde{\mathcal V}_{3,2}}^\ast(u) + \tilde \delta_1\, {\tilde{\mathcal K}_{3,2}}^\ast(u)
\equiv c \, \mathcal I_{3,1}(u) -\alpha_0 \, \mathcal I_{3,0}(u)-(\beta_0-\gamma_1)\, \mathcal V_{3,0}(u) 
\\
-\gamma_0\, \mathcal W_{3,0}(u) -\beta_1\, \mathcal V_{3,1}(u)- \delta_1\, \mathcal K_{3,1}(u),
\end{multline}
which in turn by the relations above is satisfied provided that we impose the system
$$
\begin{cases}
2\, \tilde \alpha_0 -2\, \tilde \gamma_1=-\alpha_0
\\
4\, p\, \tilde \beta_0 - 2(p+1)\, \tilde \gamma_1 + 2\, \tilde \gamma_0=-\beta_0+\gamma_1
\\
2\, \tilde \beta_1-2\, \tilde \gamma_0 +2 (p+1) \tilde \gamma_1=-\beta_1
\\
-2\, \tilde \gamma_0 + 2 (p+1) \tilde \gamma_1=-\gamma_0
\\
2\tilde \delta_1=-\delta_1.
\end{cases}
$$
Notice that we have a system of five equations with six variables, hence we can fix for instance $\tilde \gamma_1=0$ and the reduced corresponding linear system
 is associated with the matrix
$$
 \begin{bmatrix} 2 & 0  & 0 & 0 & 0\\ 0& 4p & 2 & 0 & 0\\
 0 & 0 & -2 & 2 & 0\\
 0 & 0 & -2 & 0 & 0\\
 0 & 0 & 0 & 0 & 2\end{bmatrix}
$$  
which is invertible.
%$\tilde \alpha_0, \tilde \beta_0, \tilde \gamma_1, \tilde \gamma_0-(p+1) \tilde \gamma_1$
%It is readily seen that the system is invertible.
%and hence we conclude since the following matrix is invertible
%$$
% \begin{bmatrix} 2 & 0 & 2 \\ 0& 4p & 2(p+1)\\ 0 & 0 & p+1.\end{bmatrix}
%$$

\smallskip
Next we consider separately the three cases $k=3m+1$, $k=3m+2$ and $k=3m$, with $m>0$.
%The case $k=2$ can be treated by similar argument via a much simpler computation.
 \\
 \\
 \noindent {\bf Third case: $k=3m+1$, $m>0$.}    
 We shall prove the following facts, which in turn imply \eqref{saT} for $k=3m+1$:
\begin{itemize}
\item there exist $\tilde \alpha_h\in \mathbb{R}$, $h=0,\ldots,m$ such that 
\begin{equation}
\label{e:case1}
\sum_{h=0}^{m}\tilde \alpha_h\, \tilde{\mathcal I}_{k,h+1}^\ast(u) \equiv - \sum_{h=0}^{m} \alpha_h\, \mathcal I_{k,h}(u);
\end{equation}
\item there exist $\tilde \beta_h, \tilde \gamma_h, \beta \in \mathbb{R}$, $h=0,\ldots, m$ such that
\begin{multline}
\label{e:case3p}
\sum_{h=0}^{m}\Big(\tilde \beta_{h}\, \tilde{\mathcal V}_{k,h+1}^\ast(u) + \tilde \gamma_{h}\, \tilde{\mathcal W}_{k,h+1}^\ast (u)\Big)\\\equiv - \sum_{h=0}^{m}\Big( \beta_h\, \mathcal V_{k,h}(u) + \gamma_h \, \mathcal W_{k,h}(u) \Big)+ \beta \,\mathcal K_{k,m}(u);
\end{multline}
\item there exist $\tilde \delta_h$, $h=1, \ldots, m$ such that
\begin{equation}
\label{e:case2}
\sum_{h=1}^{m}\tilde \delta_h\, \tilde{\mathcal K}_{k,h+1}^\ast(u) \equiv - \beta \, \mathcal K_{k,m}(u) - \sum_{h=1}^{m} \delta_h\, \mathcal K_{k,h}(u)
\end{equation}
where $\beta$ is the same constant that appears in \eqref{e:case3p}.
\end{itemize}
We begin with proving \eqref{e:case1}. 
From Lemmas \ref{lem:1}, \ref{lem:2}, and \ref{lem:3},
we get
\[
\left\{
\begin{aligned}
\tilde{\mathcal I}_{k,1}^\ast(u)&\equiv 2\, \mathcal I_{k,0}(u) -2(p-1)\, \mathcal I_{k,1}(u) 
\\
\vdots
\\
\tilde{\mathcal I}_{k,h+1}^\ast (u)&\equiv 2\, \mathcal I_{k,h}(u) -2\, \mathcal I_{k,h+1} (u)
\\
\vdots
\\
\tilde{\mathcal I}_{k, m+1}^\ast(u) &\equiv  6 \,\mathcal I_{k,m}(u).
\end{aligned}
\right.
\]
Thus \eqref{e:case1} is associated with as a linear system with matrix as follows:
\[A=\begin{bmatrix}
2 & 0 & 0 & \cdots & \cdots &  \cdots & 0\\
-2(p-1) & 2 & 0 & \ddots & \ddots &   \ddots & 0
\\
0 & -2 & 2 & 0 & \ddots & \ddots & 0
\\
0& 0 & -2 & 2 & 0& \ddots & 0 
\\
\vdots & \ddots & \ddots & \ddots & \ddots & \ddots &\vdots 
\\
0 & \ddots & \ddots & \ddots & -2 & 2 & 0 \\
0 &\cdots & \cdots & \cdots  & 0 & -2 & 6 
\end{bmatrix}
%, \quad \vec \alpha= \begin{bmatrix} \alpha_1\\  \\ \\ \vdots \\\\\\ \alpha_m\end{bmatrix} 
%, \quad \vec {\tilde \alpha}= \begin{bmatrix} \tilde \alpha_1\\  \\\\ \vdots\\ \\\\ \tilde \alpha_m\end{bmatrix} 
\]
The matrix is clearly invertible, since it is a lower triangular matrix, whose diagonal entries are non-zero
and hence \eqref{e:case1} follows.
\smallskip
We prove next \eqref{e:case3p}. To do so first notice that 
since we are assuming $k=2m+1$ then we get
\begin{equation}\label{rmoa}
\mathcal V_{k,m}(u) =  \mathrm{Im}\int \partial_x^{2m+1} u\, \partial_x^{2m}\bar u\,\partial_x^{2m+1} u\, u^{p-1}\, \bar u^{p}  = \mathcal K_{k,m}\end{equation}
and also
\begin{multline}\label{rmo}
\mathcal W_{k,m}(u)= \mathrm{Im}\int \partial_x^{2m+1} u\, \partial_x^{2m}\bar u\,\partial_x^{2m+1}\bar u\, u^{p}\, \bar u^{p-1}
\\
\equiv - \mathrm{Im}\int \partial_x^{2m+2} u\, \partial_x^{2m-1}\bar u\,\partial_x^{2m+1}\bar u\, u^{p}\, \bar u^{p-1}-
\mathrm{Im}\int \partial_x^{2m+1} u\, \partial_x^{2m-1}\bar u\,\partial_x^{2m+2}\bar u\, u^{p}\, \bar u^{p-1}
\\
=-\, \mathcal W_{k,m-1}(u)+\mathcal V_{k,m-1}(u) .
\end{multline}
Hence we get that \eqref{e:case3p} follows if we show that we can select $
\tilde \beta_h, \tilde \gamma_h$ and $\beta'$ such that
\begin{multline}
\label{e:case3pp}
\sum_{h=0}^{m}\Big(\tilde \beta_{h}\, \tilde{\mathcal V}_{k,h+1}^\ast(u) + \tilde \gamma_{h}\, \tilde{\mathcal W}_{k,h+1}^\ast (u)\Big)\\\equiv - \sum_{h=0}^{m-1}\Big( \beta_h'\, \mathcal V_{k,h}(u) + \gamma_h' \, \mathcal W_{k,h}(u) \Big)+ \beta' \,\mathcal K_{k,m}(u),
\end{multline}
where the coefficients $\beta_h'$, $\gamma_h'$ are uniquely defined by $\beta_h, \gamma_h$
once we take into account \eqref{rmoa} and \eqref{rmo}.
Indeed we shall prove that 
we can select $
\tilde \beta_h, \tilde \gamma_h$ for $h=0, \dots, m-1$ such that
\begin{multline}
\label{e:case3ppp}
\sum_{h=0}^{m-1}\Big(\tilde \beta_{h}\, \tilde{\mathcal V}_{k,h+1}^\ast(u) + \tilde \gamma_{h}\, \tilde{\mathcal W}_{k,h+1}^\ast (u)\Big)\\\equiv - \sum_{h=0}^{m-1}\Big( \beta_h'\, \mathcal V_{k,h}(u) + \gamma_h' \, \mathcal W_{k,h}(u) \Big)+ \beta' \,\mathcal K_{k,m}(u).
\end{multline}
 Hence, we deal with the system given by $\tilde {\mathcal V}_{k,h}^\ast(u)$ and $\tilde {\mathcal W}_{k,h}^\ast(u)$ in terms of linear combinations of ${\mathcal V}_{k,h}(u)$ and ${\mathcal W}_{k,h}(u)$. 
%First notice that since $k=2m+1$ we have 
By Lemmas \ref{lem:1}, \ref{lem:2} we have
%the following system for $h=2,\dots, m-2$
\begin{equation*}
\label{e:systemVW}
\left\{
\begin{aligned}
\tilde{\mathcal V}_{k,1}^\ast(u)&\equiv 4p \, \mathcal V_{k,0}(u)
\\
\tilde{\mathcal W}_{k,1}^\ast(u)&\equiv 2\, \mathcal V_{k,0}(u)-2\, \mathcal W_{k,0} (u)-2\, \mathcal V_{k,1}(u)
\\
\vdots
\\
\tilde{\mathcal V}_{k,h+1}^\ast(u)&\equiv 2\, \mathcal V_{k,h} (u)
\\
\tilde{\mathcal W}_{k,h+1}^\ast(u)&\equiv 2\, \mathcal V_{k,h}(u)-2\, \mathcal W_{k,h}(u) -2\, \mathcal V_{k,h+1}(u)
\\
\vdots
\\
\tilde{\mathcal V}_{k,m}^\ast(u)&\equiv 2\, \mathcal V_{k,m-1} (u)
\\
\tilde{\mathcal W}_{k,m}^\ast(u)&\equiv 2 {\mathcal V}_{k,m-1}(u)-2 {\mathcal W}_{k,m-1}(u)
- 2 {\mathcal K}_{k,m}(u),
\end{aligned}
\right.
\end{equation*}
where we have used in the last equation \eqref{rmoa}.
Hence, \eqref{e:case3p} is proved if the following matrix  is invertible
\[
\begin{bmatrix}
4p & 2 & 0 & 0 & 0 &\cdots & \cdots & 0
\\
0 & -2 & 0 & 0 & 0 &\ddots & \ddots &   0
\\
0 & -2  & 2 & 2 & 0& \ddots & \ddots & 0
\\
0 & 0 & 0 & -2 &  0& \ddots  &  \ddots & 0
\\
0 & 0 & 0 & -2 &  2 & \ddots & \ddots  & 0 
\\
\vdots & & \ddots & \ddots &\ddots & \ddots & \ddots & \vdots 
\\ 
\\
0 & 0 & 0 &0 & 0 & \cdots   & 2 & 2
\\
0 & 0 & 0 &0 & 0 & \cdots   & 0 & -2 
\end{bmatrix}
= 
\begin{bmatrix}
\mathcal A & \mathbf 0 & \cdots & \mathbf 0 & \mathbf 0
\\
\ast & \mathcal B& & \mathbf 0& \mathbf 0
\\
& \ddots & \ddots & \vdots & \vdots
\\
\mathbf \ast  & \ast & & \mathcal B & \mathbf 0
\\
\mathbf \ast & \mathbf \ast  & & \ast& \mathcal B
\end{bmatrix}
\]
with
\[
\mathcal A= \begin{bmatrix} 4p &2 \\ 0&-2 \end{bmatrix},
\mathcal B= \begin{bmatrix} 2 &2 \\ 0&-2 \end{bmatrix}.
\]
This matrix is  invertible, since it is a  block lower triangular matrix, whose diagonal blocks are all invertible.
It remains to prove \eqref{e:case2}, for which we need to consider the system given by $\tilde {\mathcal K}_{k,h}(u)$ in terms of $\mathcal K_{k,h}(u)$, as deduce by Lemmas \ref{lem:2}, 
\ref{lem:3}. The associated matrix is diagonal with non-zero (diagonal) entries,  hence it is invertible and then \eqref{e:case2} holds true.
\medskip\\
\noindent {\bf Fourth case $k=3m+2$, $m>0$.} We shall prove
\eqref{e:case1}, \eqref{e:case3p}, \eqref{e:case2}.
By using Lemmas \ref{lem:1}, \ref{lem:2}, \ref{lem:3} we get
\[
\left\{
\begin{aligned}
\tilde{\mathcal I}_{k,1}^\ast(u)&\equiv 2\, \mathcal I_{k,0}(u) -2(p-1)\, \mathcal I_{k,1}(u) 
\\
\vdots
\\
\tilde{\mathcal I}_{k,h+1}^\ast (u)&\equiv 2\, \mathcal I_{k,h}(u) -2\, \mathcal I_{k,h+1} (u)
\\
\vdots
\\
\tilde{\mathcal I}_{k, m+1}^\ast(u) &\equiv  2 \,\mathcal I_{k,m}(u)
\end{aligned}
\right.
\]
and hence \eqref{e:case1} follows exactly as in the case $k=2m+1$.
Next notice that 
by Lemmas \ref{lem:1}, \ref{lem:2}, \ref{lem:3} we get
\begin{equation*}
\label{e:systemVW.2}
\left\{
\begin{aligned}
\tilde{\mathcal V}_{k,1}^\ast(u)&\equiv 4p \, \mathcal V_{k,0}(u)
\\
\tilde{\mathcal W}_{k,1}^\ast(u)&\equiv 2\, \mathcal V_{k,0}(u)-2\, \mathcal W_{k,0} (u)-2\, \mathcal V_{k,1}(u)
\\
\vdots
\\
\tilde{\mathcal V}_{k,h+1}^\ast(u)&\equiv 2\, \mathcal V_{k,h} (u)
\\
\tilde{\mathcal W}_{k,h+1}^\ast(u)&\equiv 2\, \mathcal V_{k,h}(u)-2\, \mathcal W_{k,h}(u) -2\, \mathcal V_{k,h+1}(u)
\\
\vdots
\\
\tilde{\mathcal V}_{k,m+1}^\ast(u)&\equiv 2\, \mathcal V_{k,m} (u)
\\
\tilde{\mathcal W}_{k,m+1}^\ast(u)&\equiv 2 {\mathcal V}_{k,m} - {\mathcal W}_{k,m}
+ 2 {\mathcal K}_{k,m}.
\end{aligned}
\right.
\end{equation*}
Also in this case it is easy to check that the associated matrix to the previous system is invertible and we get \eqref{e:case3p}. Finally \eqref{e:case2} follows
as in the case $k=2m+1$ since the corresponding system is diagonal with non zero entries
on the diagonal.
\medskip\\
\noindent {\bf Fifth case: $k=3m$, $m>0$.}
We split the proof of \eqref{saT2} in the following steps:
%{\color{red}which in turn imply \eqref{saT} for $k=3m+1$}:
\begin{itemize}
\item there exist $\tilde \alpha_h\in \mathbb{R}$, $h=0,\ldots,m-1$ such that 
\begin{equation}
\label{e:case1rm}
\sum_{h=0}^{m-1}\tilde \alpha_h\, \tilde{\mathcal I}_{k,h+1}^\ast(u) \equiv 
 c \,
\mathrm{Im}\int \big(\partial_x^{2m} u\big)^3\, u^{p-2}\, \bar u^{p+1} - \sum_{h=0}^{m-1} \alpha_h\, \mathcal I_{k,h}(u);
\end{equation}
\item there exist $\tilde \beta_h, \tilde \gamma_h, \beta, \beta' \in \mathbb{R}$, $h=0,\ldots, m$ such that
\begin{multline}
\label{e:caserm}
\sum_{h=0}^{m}\Big(\tilde \beta_{h}\, \tilde{\mathcal V}_{k,h+1}^\ast(u) + \tilde \gamma_{h}\, \tilde{\mathcal W}_{k,h+1}^\ast (u)\Big)\equiv - \sum_{h=0}^{m}\Big( \beta_h\, \mathcal V_{k,h}(u) + \gamma_h \, \mathcal W_{k,h}(u) \Big)\\+ \beta \,\mathcal K_{k,m}(u)
+\beta' \,\mathcal K_{k,m-1}(u);
\end{multline}
\item there exist $\tilde \delta_h$, $h=1, \ldots, m$ such that
\begin{equation}
\label{e:case2rm}
\sum_{h=1}^{m}\tilde \delta_h\, \tilde{\mathcal K}_{k,h+1}^\ast(u) \equiv -\beta \, \mathcal K_{k,m}(u) 
-\beta' \,\mathcal K_{k,m-1}(u) - \sum_{h=1}^{m} \delta_h\, \mathcal K_{k,h}(u)
\end{equation}
where $\beta, \beta'$ are the same constant that appears in \eqref{e:caserm}.
\end{itemize}
Concerning \eqref{e:case1rm}, we have by Lemmas \ref{lem:1}, \ref{lem:2}
\[
\left\{
\begin{aligned}
\tilde{\mathcal I}_{k,1}^\ast(u)&\equiv 2\, \mathcal I_{k,0}(u) -2(p-1)\, \mathcal I_{k,1}(u) 
\\
\vdots
\\
\tilde{\mathcal I}_{k,h+1}^\ast (u)&\equiv 2\, \mathcal I_{k,h}(u) -2\, \mathcal I_{k,h+1} (u)
\\
\vdots
\\
\tilde{\mathcal I}_{k,m}^\ast (u)&\equiv 2\, \mathcal I_{k,m-1}(u) -2\, \mathcal I_{k,m} (u)
\end{aligned}
\right.
\]
for $h=1,\cdots,m-2$. Notice that due to Lemma \ref{lem:3}
we cannot exploit $\tilde{\mathcal I}_{k,m+1}^\ast (u)$ to cancel the term $
{\mathcal I}_{k,m}(u)=\mathrm{Im}\int \big(\partial_x^{2m} u\big)^3\, u^{p-2}\, \bar u^{p+1}$.
Hence \eqref{e:case1rm} follows from the invertibility of the matrix
\[A=\begin{bmatrix}
2 & 0 & 0 & \cdots & \cdots & \cdots & 0\\
-2(p-1) & 2 & 0 & \cdots & \cdots &   \cdots & 0
\\
0 & -2 & 2 & 0 & \cdots & \cdots & 0
\\
0& 0 & -2 & 2 & 0& \cdots & 0
\\
& \ddots & \ddots & \ddots & \ddots &\ddots 
\\
0 & \cdots & \cdots & \cdots & -2 & 2 & 0\\
0 &\cdots & \cdots & \cdots & 0 & -2 & 2
\end{bmatrix}
\]
which is lower triangular with non zero entries on the diagonal.
In order to prove \eqref{e:caserm} we use Lemmas \ref{lem:1}, \ref{lem:2}, \ref{lem:3} and we get
\begin{equation*}
\label{e:systemVW.3}
\left\{
\begin{aligned}
\tilde{\mathcal V}_{k,1}^\ast(u)&\equiv 4p \, \mathcal V_{k,0}(u)
\\
\tilde{\mathcal W}_{k,1}^\ast(u)&\equiv 2\, \mathcal V_{k,0}(u)-2\, \mathcal W_{k,0} (u)-2\, \mathcal V_{k,1}(u)
\\
\vdots
\\
\tilde{\mathcal V}_{k,h+1}^\ast(u)&\equiv 2\, \mathcal V_{k,h} (u)
\\
\tilde{\mathcal W}_{k,h+1}^\ast(u)&\equiv 2\, \mathcal V_{k,h}(u)-2\, \mathcal W_{k,h}(u) -2\, \mathcal V_{k,h+1}(u)
\\
\vdots
\\
\tilde{\mathcal V}_{k,m+1}^\ast(u)&\equiv 2\, \mathcal V_{k,m} (u)
\\
\tilde{\mathcal W}_{k,m+1}^\ast(u)&\equiv  2{\mathcal V}_{k, m}
- 2{\mathcal W}_{k, m} - 2{\mathcal W}_{k, m-1}  - 2{\mathcal K}_{k, m-1}
+4{\mathcal K}_{k, m}.
\end{aligned}
\right.
\end{equation*}
Hence in order to deduce \eqref{e:caserm} we have to consider the invertibility of a block lower triangular matrix with the following structure
\[A=\begin{bmatrix}
4p & 2 & 0 & \cdots & \cdots & \cdots & \cdots & \cdots & 0\\
0 & -2 & 0 & \cdots & \cdots &   \cdots &  \cdots &  \cdots  & 0
\\
0 & -2 & 2 & 2 & \cdots & \cdots &  \cdots &  \cdots &0
\\
0& 0 & 0 & -2 & 0& \cdots  & \cdots   & \cdots  &0 
\\
& \ddots & \ddots & \ddots & \ddots &\ddots 
\\ 0 & \cdots & \cdots & \cdots  & -2 & 2 & 2& 0 & 0\\
0 &\cdots & \cdots & \cdots  & 0 & 0 & -2 & 0 & -2\\
0 &\cdots & \cdots & \cdots  & 0 & 0 & - 2 & 2 & 2\\
0 &\cdots & \cdots & \cdots  & 0 & 0 & 0 & 0 & -2
\end{bmatrix}
= \begin{bmatrix}
\mathcal A & \mathbf 0 & \cdots & \mathbf 0 & \mathbf 0
\\
\ast & \mathcal B& & \mathbf 0& \mathbf 0
\\
& \ddots & \ddots & \vdots & \vdots
\\
\mathbf \ast  & \ast & & \mathcal B & \mathbf 0
\\
\mathbf \ast & \mathbf \ast  & & \ast& \mathcal C
\end{bmatrix}
\]
with
\[
\mathcal A= \begin{bmatrix} 4p &2 \\ 0&-2 \end{bmatrix},
\mathcal B= \begin{bmatrix} 2 &2 \\ 0&-2 \end{bmatrix},
\mathcal C= \begin{bmatrix} 2 & 2 & 0 & 0 \\ 0 & -2 & 0 & -2\\ 0 & -2 & 2 & 2\\ 0&0&0&-2 \end{bmatrix}
\]
and clearly all the blocks are invertible by direct computation.
It remains to prove \eqref{e:case2rm}, for which we need to consider the system given by $\tilde {\mathcal K}_{k,h}(u)$ in terms of $\mathcal K_{k,h}(u)$, as deduced by Lemmas \ref{lem:2}, \ref{lem:3}. Since the associated matrix is diagonal with non-zero (diagonal) entries we conclude. 
%%%%%%%%%%%%%%%%%%%%%%%%%%%%%%
\section{Proof of Theorem \ref{main}  for $k\neq 3m$}\label{diverso}
The energy ${\mathcal E}_k$ is provided in \eqref{defEk}. Next we shall use the notation
$${\mathcal E}_k(u)=\|u\|_{H^k}^2+{\mathcal F}_k(u),$$ 
where ${\mathcal F}_k(u)$
is a linear combination of terms of the following type
\begin{equation}\label{Ck}
\Im \int  \partial_x^{i_1} u\, \cdots \partial_x^{i_n}u \, \partial_x^{j_1}\bar u\cdots \partial_x^{j_n}\bar u, \quad (i,j)\in {\mathcal G}_k.\end{equation}
Moreover the r.h.s. of \eqref{e:sharp-estimate} is a linear combination of terms of the type: 
\begin{equation*}
\Re \int  \partial_x^{i_1} u\, \cdots \partial_x^{i_n}u \, \partial_x^{j_1}\bar u\cdots \partial_x^{j_n}\bar u, \quad (i,j)\in {\mathcal D}_k\cup {\mathcal C}_k.\end{equation*}
We claim that 
we have the bound 
\begin{equation}\label{insiem}
\int_{t_0}^{t_0+T}  \int \big| \partial_x^{i_1} u\, \cdots \partial_x^{i_n}u \, \partial_x^{j_1}\bar u\cdots \partial_x^{j_n}\bar u\big| \leq C \|u(t_0)\|_{H^k}^{\frac{2k-4}{k-1}}, \quad  (i,j)\in {\mathcal D}_k\end{equation}
provided that $T=T(k,R)>0$ is the one of Proposition \ref{cauchyprop}
and $u(t,x)$ solves \eqref{NLSt0}.
We also claim the following time-independent estimate
\begin{equation}\label{insiem*}
\int \big| \partial_x^{i_1} u\, \cdots \partial_x^{i_n}u \, \partial_x^{j_1}\bar u\cdots \partial_x^{j_n}\bar u\big| \leq C \|u\|_{H^k}^{\frac{2k-4}{k-1}}, \quad (i,j)\in {\mathcal C}_k\cup {\mathcal G}_k , \quad u\in H^k.\end{equation}
Once \eqref{insiem} and \eqref{insiem*} are established we conclude. In fact notice that 
integration on $(t_0, t_0+T)$ of the estimate \eqref{insiem*} along with Proposition \ref{cauchyprop} implies 
\begin{equation}\label{insiem11}
\int_{t_0}^{t_0+T}  \int \big| \partial_x^{i_1} u\, \cdots \partial_x^{i_n}u \, \partial_x^{j_1}\bar u\cdots \partial_x^{j_n}\bar u\big| \leq C \|u(t_0)\|_{H^k}^{\frac{2k-4}{k-1}}, \quad  (i,j)\in {\mathcal C}_k\cup  {\mathcal G}_k\end{equation}
provided that $u(t,x)$ solves \eqref{NLSt0}. Hence we get \eqref{2} as consequence of \eqref{insiem*} where we select
$(i,j)\in {\mathcal G}_k$ and we use Proposition \ref{cauchyprop}.
%that \eqref{insiem} follows even in the case 
%$(i,j)\in {\mathcal C}_k$, and hence we get the bound \eqref{3} once \eqref{insiem} and \eqref{insiem*} 
%are established. 
%Concerning the proof of the bound \eqref{3} it is a direct consequence of the estimate \eqref{insiem*} which can be repeated {\em mutatis mutandis} also in the case $(i,j)\in {\mathcal G}_k$. 
Similarly \eqref{3} (for $\varepsilon=0$) follows by \eqref{insiem} and \eqref{insiem11} where we choose
 $(i,j)\in {\mathcal C}_k$, once \eqref{e:sharp-estimate} is integrated in time on the interval $(t_0, t_0+T)$.
\\
We next focus on the proof of \eqref{insiem}. We can order the set $\{i_1,\dots, i_n, j_1, \dots , j_n\}$
in decreasing order
$$\alpha_1\geq \alpha_2\geq \dots \geq \alpha_{2n}, \quad 
\alpha_1, \alpha_2, \alpha_3, \alpha_4\geq 1, \quad \sum_{j=1}^{2n} \alpha_j=2k.$$
Hence by using the H\"older inequality and the Sobolev embedding 
$H^{1}\subset L^\infty$ 
we get
\begin{multline*}
\int_{t_0}^{t_0+T}\int  |\partial_x^{i_1} u\, \cdots \partial_x^{i_n}u \, \partial_x^{j_1}\bar u\cdots \partial_x^{j_n}\bar u|
\\\leq \prod_{i=1}^4 \|\partial_x^{\alpha_i} u\|_{L^4((t_0,t_0+T);L^4)} \times \prod_{j=5}^{2n} \|\partial_x^{\alpha_j} u\|_{L^\infty((t_0,t_0+T); L^\infty)}\\\leq C
\prod_{i=1}^4 \|u_0\|_{H^{\alpha_i}} \times \prod_{j=5}^{2n} \|u\|_{L^\infty((t_0,t_0+T); H^{ 1+\alpha_j})}\\\leq 
C  \|u(t_0)\|_{H^k}^{\sum_{i=1}^4 \frac{\alpha_i -1}{k-1}} \|u(t_0)\|_{H^1}^{\sum_{i=1}^4 \frac{k-\alpha_i}{k-1}}  \|u(t_0)\|_{H^k}^{\sum_{j=5}^{2n} \frac{\alpha_j}{k-1}} \|u(t_0)\|_{H^1}^{\sum_{j=5}^{2n} \frac{k-1-\alpha_j}{k-1}}  \\
\leq C \|u(t_0)\|_{H^k}^{\frac{2k-4}{k-1}}
\end{multline*}
where we have used interpolation at the last step along with the uniform control of $H^1$ (see \eqref{energyest}).\\
Next we prove \eqref{insiem*}. As for \eqref{insiem} we can 
order $\{i_1,\dots, i_n, j_1, \dots , j_n\}$
in decreasing order
$$\alpha_1\geq \alpha_2\geq \dots \geq \alpha_{2n}, \quad 0<\alpha_1, \alpha_2\leq k-1, 
\quad \sum_{j=1}^{2n} \alpha_j=2k-2.$$
Then by the H\"older inequality and the Sobolev embedding $H^1\subset L^\infty$
we can estimate 
\begin{multline*}
\int  |\partial_x^{i_1} u\, \cdots \partial_x^{i_n}u \, \partial_x^{j_1}\bar u\cdots \partial_x^{j_n}\bar u|  
\\\leq \prod_{i=1}^2 \|\partial_x^{\alpha_i} u\|_{L^2} \times \prod_{j=3}^{2n} \|\partial_x^{\alpha_j} u\|_{L^\infty}\leq C
\prod_{i=1}^2 \|u\|_{H^{\alpha_i}} \times \prod_{j=3}^{2n} \|u\|_{H^{1+\alpha_j }}\\\leq 
C  \|u\|_{H^k}^{\sum_{i=1}^2 \frac{\alpha_i -1}{k-1}} \|u\|_{H^1}^{\sum_{i=1}^2 \frac{k-\alpha_i}{k-1}}  \|u\|_{H^k}^{\sum_{j=3}^{2n} \frac{\alpha_j}{k-1}} \|u\|_{H^1}^{\sum_{j=3}^{2n} \frac{k-1-\alpha_j}{k-1}}  
\leq C  \|u\|_{H^k}^{\frac{2k-4}{k-1}}
\end{multline*}
where we have used interpolation at the last step along with the uniform control of $H^1$ (see \eqref{energyest}).
%%%%%%%%%%%%%%%%%%
\section{Proof of Theorem \ref{main} for $k=3m$}

Notice that the case $k=3m$ is different from the case $k\neq 3m$ due to the first term that appears on the r.h.s. in \eqref{saT2}, hence our main task is to estimate
\begin{equation}\label{Immaginary}\Im \int_{t_0}^{t_0+T} \int \big(\partial_x^{2m} u\big)^3\, u^{p-2}\, \bar u^{p+1}.\end{equation}
All the other terms can be dealt with as we already did in section \ref{diverso}.\\
The key is the following proposition.
\begin{proposition}\label{bourgainspaces}
Let $\alpha,\beta\geq 0$ and $l\geq 2$ be integers. Then for every $\varepsilon>0$ there is $b<1/2$ such that for every $t_0$ we have:
\begin{multline}\label{bourgloc}
\Big|\int_{t_0}^{t_0+T} \int(\partial_x^l u)^3 u^\alpha \bar{u}^\beta\Big|\leq C
\|u\|_{X^{l-1+\varepsilon ,b}_{(t_0, t_0+T)}}\, \|u\|^2_{X^{l+\varepsilon ,b}_{(t_0, t_0+T)}}\|u\|^{\alpha+\beta}_{X^{1,b}_{(t_0, t_0+T)}}\\ \forall u \in X^{l+\varepsilon ,b}_{(t_0, t_0+T)}.
\end{multline}
\end{proposition}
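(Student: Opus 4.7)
The estimate \eqref{bourgloc} is a refined trilinear bound in $X^{s,b}$ spaces whose key feature is a one-derivative saving on the highest of the three top-order factors, paid for by an $\varepsilon$-loss of regularity and by pushing $b$ strictly below $1/2$. First I would extend $u$ to a global function in $X^{l+\varepsilon,b}(\R\times\T)$ via the $\inf$-definition of the localized norm and multiply by a smooth time cutoff equal to $1$ on $(t_0,t_0+T)$, so that \eqref{bourgloc} reduces to a global space-time integral that one can treat by Fourier methods on $\R\times\Z$. After a dyadic Littlewood--Paley decomposition $u=\sum_N P_N u$ localising at $|n|\sim N$, I would label the frequencies of the three factors of $\partial_x^l u$ by $N_1\ge N_2\ge N_3$ and those of the $\alpha+\beta$ low-regularity factors by $M_1,\ldots,M_{\alpha+\beta}$; the spatial frequency constraint $\sum_{\text{factors}}\pm n=0$ forces $N_1\sim\max(N_2,\max_j M_j)$, so two of the leading frequencies are always comparable.

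The $\alpha+\beta$ low-regularity factors would be placed in $L^q_tL^\infty_x$ for some large $q$, using $H^1(\T)\hookrightarrow L^\infty$ together with the embedding $X^{1,b}_T\hookrightarrow L^q_tH^1_x$ valid for $b$ close to $1/2$; this generates the factor $\|u\|_{X^{1,b}}^{\alpha+\beta}$. For the three top-order factors, the key input is a refined trilinear $L^2$ estimate on $\T$ that exploits the resonance identity $\sum_i n_i^2-(\sum_i n_i)^2=-2\sum_{i<j}n_in_j$, which forces the maximum modulation $\max_i\langle\tau_i+n_i^2\rangle$ to be $\gtrsim N_1N_2$ whenever the $n_i$'s sum to something small compared to $N_1$. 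Absorbing part of this modulation into the $X^{0,b}$-weight (which is possible only for $b$ strictly below $1/2$) produces a derivative gain; combining this with the $L^4$ Zygmund/Strichartz estimate $\|P_Nu\|_{L^4_{t,x}}\lesssim\|u\|_{X^{0+,b}}$ on the third factor and a H\"older inequality should yield a dyadic-scale bound of the form
\[
\Big|\int\!\!\int\prod_{i=1}^3(\partial_x^l P_{N_i}u)\prod_{j=1}^{\alpha+\beta}P_{M_j}w_j\,dxdt\Big|\lesssim N_1^{l-1+\varepsilon}N_2^{l+\varepsilon}N_3^{l+\varepsilon}\prod_{i=1}^3\|P_{N_i}u\|_{X^{0,b}}\prod_{j=1}^{\alpha+\beta}\|u\|_{X^{1,b}}.
\]
Summing in $N_1\ge N_2\ge N_3$ and $M_1,\ldots,M_{\alpha+\beta}$, the $\varepsilon$-surpluses guarantee $\ell^1$-convergence and reconstruct the norms on the right-hand side of \eqref{bourgloc}.

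\textbf{Main obstacle.} The crucial and most delicate point is the trilinear estimate producing the full one-derivative saving on $N_1$. A plain bilinear $L^2$ estimate on the two highest-frequency factors gives only a $(N_2/N_1)^{1/2}$ gain, which is $O(1)$ in the dangerous parallel regime $N_1\sim N_2$; extracting the remaining half-derivative requires exploiting the three-factor resonance structure simultaneously, with $b$ carefully chosen strictly below $1/2$ so that the modulation weight can absorb the full $N_1N_2$-sized resonance surplus. Balancing this requirement against the need that $b$ be close enough to $1/2$ for the low-regularity factors to admit $L^q_tL^\infty_x$ bounds is precisely what dictates the $\varepsilon$-dependent choice $b=b(\varepsilon)<1/2$ in the statement.
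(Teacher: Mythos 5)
Your overall architecture (reduce to a global space-time estimate, decompose dyadically, gain a derivative on the top frequency from the modulation, control the remaining factors through $X^{1,b}$) matches the paper's, but two steps as you describe them do not close. First, the reduction to a global integral: multiplying by a \emph{smooth} cutoff $\varphi$ equal to $1$ on $(t_0,t_0+T)$ does not reduce $\int_{t_0}^{t_0+T}\int(\partial_x^l u)^3u^\alpha\bar u^\beta$ to a global integral, because the integrand is not sign-definite and $\int_\R\varphi^{\,3+\alpha+\beta}(\cdots)\neq\int_{t_0}^{t_0+T}(\cdots)$. One must use the sharp characteristic function $\chi_{[t_0,t_0+T]}$, for which the two integrals coincide exactly, and then prove that multiplication by $\chi_{[t_0,t_0+T]}$ is bounded on $X^{s,b}$; this holds precisely because $b<1/2$ (the paper's Lemma \ref{lyON}, proved via the difference-quotient characterization of $\dot H^b(\R)$ and a Hardy inequality), and it is the real reason the statement insists on $b<1/2$ --- not, as you suggest, the absorption of the resonance surplus into the weight, which only improves as $b$ increases toward $1/2$.

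Second, your case analysis is incomplete exactly where the modulation gain fails. When $N_1\lesssim\max_jM_j$, i.e.\ the largest frequency also sits on one of the $\alpha+\beta$ low-regularity factors (necessarily a $\bar u$ factor, since only those contribute negatively to the modulation sum), there is no lower bound on $\max_i\langle\tau_i+n_i^2\rangle$: take $n_1=n_{j_0}=N$ and $n_2,n_3=O(1)$, so that $n_1^2+n_2^2+n_3^2-n_{j_0}^2=O(1)$. In that regime your plan of putting every low-regularity factor in $L^q_tL^\infty_x$ via $X^{1,b}$ recovers at most $M_{j_0}^{-1/2}\sim N_1^{-1/2}$ by Bernstein, which is only half the derivative you need on $N_1$. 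The paper instead transfers a full derivative, $|n_1|^l\le|n_1|^{l-1}|n_{j_0}|$, placing the resulting factor $|\partial_x|u$ (controlled by $X^{1,0}$) in $L^2_{t,x}$ and the three top-order factors in $L^6_{t,x}$ via the $\varepsilon$-lossy $L^6$ Strichartz estimate $\|u\|_{L^6}\leq C\|u\|_{X^{\varepsilon,b}}$ with $b<1/2$. Finally, a remark rather than a gap: in the complementary regime $N_1\gg\max_jM_j$ the ``delicate'' trilinear resonance analysis you anticipate is unnecessary, because all three top-order factors are unconjugated copies of $u$, so the modulation sum is bounded below by $n_1^2+n_2^2+n_3^2-\sum_{j\ge4}n_j^2\gtrsim N_1^2$ with no possible cancellation; this structural feature (and not a genuine bilinear resonance estimate) is what the construction of the modified energy was designed to exploit.
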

%%%
We start with a result concerning Sobolev spaces on $\R$, rather useful along the proof of Proposition \ref{bourgainspaces}.
\begin{lemma}\label{lyON} Let $-\infty<a<a'<\infty$ and $b\in [0, \frac 12)$ then there exists $C>0$ such that:
\begin{equation}\label{cutoff}\|\chi_{[a,a']} u\|_{\dot H^b(\R)}\leq C \|u\|_{\dot H^b(\R)}.
\end{equation}
\end{lemma}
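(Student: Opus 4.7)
The case $b=0$ is trivial since multiplication by $\chi_{[a,a']}$ is a pointwise contraction on $L^2(\R)$, so I focus on $b \in (0, 1/2)$. The plan is to use the Gagliardo--Slobodeckij characterization
$$\|f\|_{\dot H^b(\R)}^2 \sim \int_\R\int_\R \frac{|f(x)-f(y)|^2}{|x-y|^{1+2b}}\,dx\,dy,$$
apply it to $f = \chi_{[a,a']} u$, and split the difference as
$$\chi_{[a,a']}(x)u(x)-\chi_{[a,a']}(y)u(y) = \chi_{[a,a']}(x)\bigl(u(x)-u(y)\bigr) + u(y)\bigl(\chi_{[a,a']}(x)-\chi_{[a,a']}(y)\bigr),$$
so the Gagliardo double integral for $\chi_{[a,a']} u$ is controlled by the sum of two pieces.

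The first piece is harmless: since $\chi_{[a,a']}(x)^2 \leq 1$ pointwise, it is dominated by the Gagliardo seminorm of $u$ itself, hence by $C\|u\|_{\dot H^b(\R)}^2$. The main step is the second piece. Since $\chi_{[a,a']}$ takes values in $\{0,1\}$, the factor $(\chi_{[a,a']}(x)-\chi_{[a,a']}(y))^2$ is supported where exactly one of $x,y$ lies in $[a,a']$; by symmetry we reduce to $y \in [a,a']$ and $x \in \R\setminus[a,a']$, and the inner integral in $x$ is estimated by the elementary computation
$$\int_{\R\setminus[a,a']}\frac{dx}{|x-y|^{1+2b}} \leq C_b\left(\frac{1}{(y-a)^{2b}}+\frac{1}{(a'-y)^{2b}}\right).$$

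The remaining task is to bound $\int_a^{a'} |u(y)|^2(y-a)^{-2b}\,dy$ (and its twin with $a'-y$). For this I invoke the one-dimensional fractional Hardy inequality
$$\int_\R \frac{|u(y)|^2}{|y-c|^{2b}}\,dy \leq C_b\,\|u\|_{\dot H^b(\R)}^2, \qquad c \in \R,$$
which holds precisely because $b < 1/2$ ensures $|y-c|^{-2b}$ is locally integrable and the trace of $\dot H^b(\R)$ functions at a point is not well defined. Applying this with $c=a$ and $c=a'$ closes the estimate.

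The only delicate ingredient is the Hardy inequality; the restriction $b<1/2$ is exactly what makes multiplication by $\chi_{[a,a']}$ bounded, and this is why Bourgain-type arguments requiring $b$ slightly below $1/2$ are compatible with time localization. For $b\geq 1/2$ the jump of $\chi_{[a,a']}$ would be visible to the fractional derivative and the statement would fail.
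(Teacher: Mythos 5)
Your proof is correct and follows essentially the same route as the paper's: both rest on the Gagliardo--Slobodeckij characterization of $\dot H^b$ together with the one-dimensional fractional Hardy inequality (valid precisely because $b<1/2$), and on the same elementary bound for $\int_{\R\setminus[a,a']}|x-y|^{-1-2b}\,dx$. The only cosmetic difference is that you split the integrand algebraically whereas the paper splits the integration domain into $I\times I$, $I\times I^c$ and $I^c\times I$; note only that in your second piece the sub-case $x\in[a,a']$, $y\notin[a,a']$ is not literally obtained ``by symmetry'' from the one you write out (the weight $|u(y)|^2$ sits on different sides), though it is closed by exactly the same Hardy estimate.
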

\begin{proof}
We shall use the following equivalence of norms
\begin{equation}\label{equivalence}\|u\|_{\dot H^b(\R)}^2\sim \int_\R\int_\R \frac{|u(x)-u(y)|^2}{|x-y|^{1+2b}} dxdy, \quad b\in (0,1),
\end{equation}
as well as the following Hardy type inequality
\begin{equation}\label{hardy}\int_\R \frac{|u|^2}{|x|^{2b} }\leq C  \|u\|_{\dot H^b(\R)}^2, \quad b\in (0,\frac 12)\end{equation}
For a proof of \eqref{equivalence} and \eqref{hardy} see \cite{BCD}. 
In the sequel for simplicity we assume $a=0, a'=1$ and we denote $I=[0,1]$, $I^c=\R\setminus [0, 1]$.
We denote $\tilde u=\chi_{I} u$
then thanks to \eqref{equivalence} the square of the  l.h.s. in \eqref{cutoff}
is equivalent to
\begin{multline*}
\int \int_{I\times I}  \frac{|\tilde u(x)-\tilde u(y)|^2}{|x-y|^{1+2b}} dxdy
+\int_{I} (\int_{I^c}  \frac{|\tilde u(x)-\tilde u(y)|^2}{|x-y|^{1+2b}} dy)dx
+
\int_{I^c} (\int_{I}\frac{ |\tilde u(x)-\tilde u(y)|^2}{|x-y|^{1+2b}} dy) dx
\\=\int \int_{I\times I}  \frac{|u(x)-u(y)|^2}{|x-y|^{1+2b}} dxdy
+\int_{I} (\int_{I^c}  \frac{|u(x)|^2}{|x-y|^{1+2b}} dy)dx
+
\int_{I^c} (\int_{I}\frac{ |u(y)|^2}{|x-y|^{1+2b}} dy) dx\\=A_1+A_2+A_3,
\end{multline*}
where we used that $\tilde u(x)-\tilde u(y)=0$ for $(x, y)\in I^c\times I^c$.
Of course we have $$A_1 \leq \int \int_{\R\times \R}  \frac{|u(x)-u(y)|^2}{|x-y|^{1+2b}} dxdy\sim \|u\|_{\dot H^b(\R)}^2$$
where we used \eqref{equivalence}.
Concerning $A_2$ we have
\begin{multline*}A_2=\int_{I} |u(x)|^2 (\int_{I^c}  \frac{1}{|x-y|^{1+2b}} dy) dx
\leq \int_{\R} \frac{|u(x)|^2}{|x|^{2b}} dx +  \int_{\R} \frac{|u(x)|^2}{|1-x|^{2b}} dx
\\\leq C \|u\|_{\dot H^b(\R)}^2\end{multline*}
where we used \eqref{hardy}, the translation invariance of the $\dot H^b$ norm and the elementary bound 
$$\int_{I^c} \frac 1{|x-y|^{1+2b}} dy \leq \frac C{|x|^{2b}} + \frac C{|1-x|^{2b}}, \quad \forall x\in (0,1).$$
Concerning $A_3$ we have
$$A_3=
\int_{I^c} (\int_{[0,1]}\frac{ |u(y)|^2}{|x-y|^{1+2b}} dy) dx
= \int_{I} |u(y)|^2 (\int_{I^c} \frac 1{|x-y|^{1+2b}} dx) dy$$
and hence it is exactly $A_2$.
Hence we conclude \eqref{cutoff}.

\end{proof}
%%%
\begin{proof}[Proof of Proposition \ref{bourgainspaces}]
For simplicity we restrict to the case $t_0=0$, the general case is identical. 
We denote by $\chi_T$ the characteristic function of the interval $[0,T]$.
%Since for $$
%0<b<b'<\frac 1 2$$ 
%we have the bound 
%\begin{equation*}
%\label{lyon}
%\|\chi_T f\|_{H^b(\R)}\leq C\|f\|_{H^{b'}(\R)}, 
%\end{equation*}
%($C$ is independent of $T$) 
%Since $b<1/2$ 
We claim that it is sufficient to show the following bound:
\begin{equation}\label{globRR}
\Big|\int_\R\int(\partial_x^l u)^3 u^\alpha \bar{u}^\beta\Big|\leq C
\|u\|_{X^{l-1+\varepsilon ,b}}\, \|u\|^2_{X^{l+\varepsilon ,b}}\|u\|^{\alpha+\beta}_{X^{1,b}}, \quad \forall u\in X^{l+\varepsilon, b}
\end{equation}
for some $b<\frac 12$ dependent on $\varepsilon>0$.
We first show how the localized estimate \eqref{bourgloc} follows from the estimate \eqref{globRR}. 
We define the sequences $u_{n,1}\in X^{l+\varepsilon ,b}$, $u_{n,2}\in X^{l-1+\varepsilon ,b}$, $u_{n,3}\in X^{1,b}$
such that:
\begin{equation}\label{restrict}u_{n, k}(t,x)=u(t,x) \hbox{ on } [0, T]\times \T, \quad k=1,2,3\end{equation}
and
\begin{multline}\label{extension}\|u_{n,1}\|_{X^{l+\varepsilon ,b}}\rightarrow \|u\|_{X^{l+\varepsilon ,b}_{(0,T)}},\\
\|u_{n,2}\|_{X^{l-1+\varepsilon ,b}}\rightarrow \|u\|_{X^{l-1+\varepsilon ,b}_{(0,T)}},
\|u_{n,3}\|_{X^{1,b}}\rightarrow \|u\|_{X^{1,b}_{(0,T)}}.\end{multline}
Next notice that by Lemma \ref{lyON} and the definition of the Bourgain spaces we have
\begin{multline}\label{corlem}
\|\chi_T u_{n,1}\|_{X^{l+\varepsilon ,b}}\leq C\|u_{n,1}\|_{X^{l+\varepsilon ,b}},\\ \|\chi_T u_{n,2}\|_{X^{l-1+\varepsilon ,b}}\leq C\|u_{n,2}\|_{X^{l-1+\varepsilon ,b}},
\|\chi_T u_{n,3}\|_{X^{1,b}}\leq C\|u_{n,3}\|_{X^{1,b}}.\end{multline}
On the other hand by \eqref{restrict} and \eqref{corlem} we have 
$\chi_T u=\chi_T u_{n,1}\in X^{l+\varepsilon ,b}$ hence we can plug in the estimate \eqref{globRR} the function $\chi_T u$. We conclude \eqref{globRR}
by using again the property $\chi_T u= \chi_T u_{n,k}$ for every $n\in \N, k=1,2,3$ in conjunction with
\eqref{extension} and \eqref{corlem}.
\\

Next we prove \eqref{globRR}. Let $v$ be defined by
$$
\hat{v}(\tau,n)=|\hat{u}(\tau,n)|, \quad \tau\in \R, n\in \Z .
$$
Using the Fourier transform we can write 
\begin{equation}\label{big_sum}
\Big|\int_\R\int(\partial_x^l u)^3 u^\alpha \bar{u}^\beta\Big|\leq C
\int_{\Lambda}\, \sum_{\Theta}\prod_{j=1}^3|n_j|^l\, \hat{v}(\tau_j,n_j)\times\prod_{j=4}^{3+\alpha+\beta}\, \hat{v}(\tau_j,n_j),
\end{equation}
where 
$$
\Lambda=\Big\{(\tau_1,\cdots,\tau_{3+\alpha+\beta})\in\R^{3+\alpha+\beta}\,\colon\,\sum_{j=1}^{3+\alpha}\tau_j-\sum_{j=4+\alpha}^{3+\alpha+\beta}\tau_j=0\Big\}
$$
and
$$
\Theta=\Big\{(n_1,\cdots,n_{3+\alpha+\beta})\in\Z^{3+\alpha+\beta}\,\colon\,\sum_{j=1}^{3+\alpha}n_j-\sum_{j=4+\alpha}^{3+\alpha+\beta}n_j=0\Big\}.
$$
We evaluate the sum over $\Theta$ by sums on which $|n_j|$ are restricted to dyadic intervals $[N_j, 2N_j]$. We set
$$
N_{(4)}=\max(N_4,\cdots N_{3+\alpha+\beta}).
$$ 
We consider two case. 
We first assume $N_1\gg N_{(4)}$. In this case, we use that for
$$
(\tau_1,\cdots,\tau_{3+\alpha+\beta})\in\Lambda
%, \quad (n_1,\cdots,n_{3+\alpha+\beta})\in\Sigma
$$
one has
$$
\Big|\sum_{j=1}^{3+\alpha}(\tau_j+n_j^2)-\sum_{j=4+\alpha}^{3+\alpha+\beta}(\tau_j+n_j^2)\Big|\geq 
n_1^2+n_2^2+n_3^2-\sum_{j=4}^{3+\alpha+\beta}n_j^2\geq C n_1^2. 
$$
We therefore have that 
\begin{equation}\label{vitto}
\max_{1\leq j\leq 3+\alpha+\beta}\, \, \langle \tau_j+n_j^2\rangle \geq C n_1^2.
\end{equation}
We now consider two sub cases of the first case. Let us first suppose that the $\max$ in the left hand-side of \eqref{vitto} is attained for a $j$ in $\{1,2,3\}$. 
Without restriction of the generality we can suppose that the $\max$ in the left hand-side of \eqref{vitto} is attained for $j=1$. 
Define $w_1$ and $w_2$ by
$$
\widehat{w_1}(\tau,n)=
|n|^{l-1+\varepsilon}\,  \langle \tau+n^2\rangle^b\hat{v}(\tau,n),\quad
\widehat{w_2}(\tau,n)=
|n|^{l}\, \hat{v}(\tau,n) .
$$
The contribution of the case under consideration to \eqref{big_sum} can be evaluated by
\begin{multline*}
\int_\R\int  w_1 \,w_2^2\, v^\alpha \bar{v}^\beta
\leq 
\|w_1\|_{L^2(\R; L^2)}\|w_2\|_{L^4(\R; L^4)}^2\|v\|_{L^\infty(\R; L^\infty)}^{\alpha+\beta}
\\
\leq C
\|u\|_{X^{l-1+\varepsilon ,b}}\, \|u\|^2_{X^{l,b}}\|u\|^{\alpha+\beta}_{X^{1,b}}\,,
\end{multline*}
where we have used the Strichartz bounds
$$
\|u\|_{L^4(\R; L^4))}\leq C \|u\|_{X^{0,b}}
$$
and 
$$
\|u\|_{L^\infty(\R; L^\infty)}\leq C \|u\|_{X^{1,b}}\,,
$$
where $b<1/2$ is sufficiently close to $1/2$. Let us next suppose that the $\max$ in the left hand-side of \eqref{vitto} is attained for a $j$ in $\{4,\cdots, 3+\alpha+\beta\}$. 
Without restriction of the generality we can suppose that the $\max$ in the left hand-side of \eqref{vitto} is attained for $j=4$. 
Define $w_1, $$w_2$ and $w_3$ by
$$
\widehat{w_1}(\tau,n)=|n|^{l-1+\varepsilon}\,  \hat{v}(\tau,n), \quad\widehat{w_2}(\tau,n)=|n|^{l}\, \hat{v}(\tau,n) 
$$
and
$$
\widehat{w_3}(\tau,n)=
 \langle \tau+n^2\rangle^b\hat{v}(\tau,n).
$$
The contribution of the case under consideration to \eqref{big_sum} can be evaluated by
\begin{multline*}
\int_\R\int w_1 \,w_2^2 w_3\, v^{\alpha-1} \bar{v}^\beta
\\
\leq 
\|w_1\|_{L^6(\R; L^6))}\|w_2\|_{L^6(\R;L^6))}^2
\|w_3\|_{L^2(\R; L^2)}
\|v\|_{L^\infty(\R; L^\infty)}^{\alpha+\beta-1}
\\
\leq C
\|u\|_{X^{l-1+\varepsilon ,b}}\, \|u\|^2_{X^{l,b}}\|u\|^{\alpha+\beta}_{X^{1,b}}\,,
\end{multline*}
where we have used the Strichartz bound
$$
\forall\, \varepsilon>0,\,\, \exists\,\, b<1/2\,\, :\,\,
\|u\|_{L^6(\R; L^6)}\leq C \|u\|_{X^{\varepsilon,b}}\,.
$$
%%%%%
Let us now consider the second case. Namely, we suppose that  $N_1\lesssim N_{(4)}$. Without restriction of the generality we can suppose that $N_{(4)}=N_4$ and we can write
\begin{multline*}
\prod_{j=1}^3|n_j|^l\, \hat{v}(\tau_j,n_j)\times \prod_{j=4}^{3+\alpha+\beta}\, \hat{v}(\tau_j,n_j)
\leq 
\\
 |n_1|^{l-1}\, \hat{v}(\tau_1,n_1)\times  \Big(\prod_{j=2}^3|n_j|^l\, \hat{v}(\tau_j,n_j)\Big)\times |n_4|\, \hat{v}(\tau_4,n_4)\times \prod_{j=5}^{3+\alpha+\beta}\, \hat{v}(\tau_j,n_j).
\end{multline*}
Define $w_1, $$w_2$ and $w_3$ by
$$
\widehat{w_1}(\tau,n)=|n|^{l-1}\,  \hat{v}(\tau,n), \quad\widehat{w_2}(\tau,n)=|n|^{l}\, \hat{v}(\tau,n) 
$$
and
$$
\widehat{w_3}(\tau,n)=
 |n|\hat{v}(\tau,n).
$$
The contribution of the case under consideration to \eqref{big_sum} can be evaluated by
\begin{multline*}
\int_\R\int w_1 \,w_2^2 w_3\, v^{\alpha-1} \bar{v}^\beta
\\
\leq 
\|w_1\|_{L^6(\R; L^6)}\|w_2\|_{L^6(\R; L^6)}^2
\|w_3\|_{L^2(\R; L^2)}
\|v\|_{L^\infty(\R; L^\infty)}^{\alpha+\beta-1}
\\
\leq C
\|u\|_{X^{l-1+\varepsilon ,b}}\, \|u\|^2_{X^{l+\varepsilon,b}}\|u\|^{\alpha+\beta}_{X^{1,b}}\,.
\end{multline*}
\end{proof}
%%%%%%%%%%%%%%%%
\noindent {\em Proof of Theorem \ref{main} for $k=3m$}. We can now complete the proof of Theorem \ref{main} in the case $k=3m$.
By using Proposition \ref{bourgainspaces} for $l=2m$ and $\alpha=p-2$, $\beta=p+1$ we get the bound
\begin{multline}\label{bsxsb}
\big|\int_{t_0}^{t_0+T}\mathrm{Im}\int \big(\partial_x^{2m} u\big)^3\, u^{p-2}\, \bar u^{p+1}\big|
\\\leq C \|u(t_0)\|_{H^{2m-1+\varepsilon }}\, \|u(t_0)\|^2_{H^{2m+\varepsilon}} \|u(t_0)\|^{2p-1}_{H^{1}}\,,
\end{multline}
where $T$ is provided by Proposition \ref{cauchypropXsb}.
%Here we used that the Cauchy theory yields the bound
%$$
%\|u\|_{X^{s,b}_{(t_0, t_0+T)}}\leq C \|u(t_0)\|_{H^s}\, ,
%$$
%where $C$ only depends on the $H^1$ norm of the solution (see \cite{Bo_gafa}).
By interpolation we can continue the estimate above as follows:
\begin{equation*}\cdots \leq C \|u(t_0)\|_{H^{3m}}^\eta \, \|u(t_0)\|_{H^{3m}}^{2\theta} \|u(t_0)\|_{H^{1}}^{1-\eta}\, \|u(t_0)\|_{H^{1}}^{2(1-\theta)} \|u(t_0)\|^{2p-1}_{H^{1}}
\end{equation*}
where
$$2m-1+\varepsilon=3m \eta + 1-\eta, \; 2m+\varepsilon=3m \theta +1-\theta$$
and hence by the conservation of the energy we conclude
$$\cdots\leq C \|u(t_0)\|_{H^{3m}}^{\eta+2\theta}= C\|u(t_0)\|_{H^{3m}}^\frac{2m-2+\varepsilon+4m-2+2\varepsilon}{3m-1}$$
and we get Theorem \ref{main} since $k=3m$.\qed
%%%%%

\end{document}